\newtheorem{theorem}{Theorem}[section]
\newtheorem{corollary}[theorem] {Corollary}
\newtheorem{definition}[theorem]{Definition}
\newtheorem{example}[theorem]{Example}
\newtheorem{lemma}[theorem]{Lemma}
\newtheorem{proposition}[theorem]{Proposition}
\newtheorem{remark}[theorem]{Remark}
\newtheorem{question}[]{Question}
\newcommand\R{\mathbb{R}}
\newcommand\Z{\mathbb{Z}}
\newcommand\C{\mathbb{C} }
\newcommand{\TC}{\mathrm{TC}}
\newcommand{\ct}{\mathrm{cat}}
\newcommand{\cl}{\mathrm{cl}}
\newcommand{\zl}{\mathrm{zcl}}
\newcolumntype{x}[1]{>{\centering\arraybackslash}p{#1}}
\begin{document}
\title[Motion planning in Dold manifolds of real torus type]{LS-category and topological complexity of real torus manifolds and Dold manifolds of real torus type}

\author[K. Brahma ]{Koushik Brahma}
\address{Department of Pure and Applied Mathematics, Faculty of Science and Engineering, Waseda University, 3-4-1 Okubo, Shinjuku-ku, Tokyo 169-8555, Japan.}
\email{w.iac24166@kurenai.waseda.jp}

\author[N. Daundkar]{Navnath Daundkar}
\address{Department of Mathematics, Indian Institute of Technology Madras, Chennai 600036, India.}
\email{navnath@iitm.ac.in}

\author[S. Sarkar]{Soumen Sarkar}
\address{Department of Mathematics, Indian Institute of Technology Madras, Chennai 600036, India.}
\email{soumen@iitm.ac.in}

\subjclass[2020]{55M30, 57S12}
\keywords{Real torus manifold, generalized Bott manifold, small cover, LS-category, Topological complexity, Dold manifold of real torus type}

\begin{abstract}
The real torus manifolds are a generalization of small covers, and the
Dold manifolds of real torus type are a class of non-trivial fibre bundles over the projective product spaces with real torus manifolds as fibres. In this paper, first,
we compute the LS-category of these two types of manifolds and obtain sharp bounds on their topological complexities. We show that under certain hypotheses, the topological complexities of real torus manifolds of dimension $n$ are either $2n$ or $2n+1$.
We figure out tight bounds for the topological complexity of generalized real Bott manifolds, and in many cases, the difference between these upper and lower bounds is less than 5. We compute the $\mathbb{Z}_2$-equivariant LS-category of small covers when the $\mathbb{Z}_2$-fixed points are path connected. In the end, we study the symmetric topological complexity of the above-mentioned manifolds and obtain exact values for infinitely many cases.
\end{abstract}

\maketitle

\section{Introduction}
Farber \cite{Far} introduced the concept of topological complexity to study the robot motion planning problem through the topological lens. 
For a topological space $X$, the topological complexity $\TC(X)$ is a numerical homotopy invariant.
Let $X$ be a path-connected space and $PX$ be the space of all paths
in $X$ equipped with the compact open topology. 
Let $\gamma \colon [0,1]\to X$ be any path in $X$. 
Then there is a fibration $\pi \colon PX \to X\times X$ defined by $\pi(\gamma)=(\gamma(0),\gamma(1))$.
The \emph{topological complexity} of $X$ is the smallest $k$ for which $X\times X$ admits an open cover $U_1, \dots, U_k$, such that there exist continuous sections of $\pi$ on $U_{i}$ for $1\leq i \leq k$. In general, determining the exact value of  $\TC(X)$ is a hard problem, even for spaces with nice CW-complex structures like Dold manifolds. 
Since Farber first introduced this invariant in $2003$, numerous methods have been created to determine bounds on $\TC(X)$, and the precise value of this invariant has been calculated for several spaces, see for example \cite{Dra2}, \cite{Dra},  \cite{Far}, \cite{FTY}, \cite{GGTX}, \cite{GLO}, \cite{GM}. 
This invariant is closely related to the well-known invariant, the \emph{Lusternik-Schnirelmann category} (in short, \emph{LS-category}) \cite{LS}. For a path connected space $X$, the LS-category of $X$ is denoted  $\mathrm{cat}(X)$. The  $\mathrm{cat}(X)$ is the smallest integer $r$ such that $X$ can be covered by $r$ open subsets 
$V_1, \dots, V_r$ for which the inclusion $V_i\hookrightarrow{} X$ is null-homotopic for $1\leq i\leq r$. 
In particular, it was shown in  \cite{CLOT} that,
\[\ct(X)\leq \TC(X)\leq 2\ct(X)-1.\]
The cohomological methods have been used to obtain some lower bounds on $\TC(X)$ and $\ct(X)$. We describe them here briefly.
Let $R$ be a commutative ring with unity and $X$ be a path-connected topological space with its cohomology ring $H^{\ast}(X; R)$. The cup-length of $X$ over $R$ is the maximal integer $\ell$ such that there exists $x_i\in {\tilde{H}}^{\ast}(X;R)$ for $i=1, \ldots, \ell$ satisfying $\prod_{i=1}^{\ell}x_i\neq 0$. 
We denote this number by $\cl_{R}(X)$. The number gives a lower bound for the $\ct(X)$, see \cite[Proposition 1.5]{CLOT}.
Let \[\cup \colon H^{\ast}(X;R)\otimes H^{\ast}(X;R) \longrightarrow H^{\ast}(X;R)\] be the map induced by the cup product. 
Then the zero-divisors-cup-length of $X$ with respect to the coefficient ring $R$ is defined as the maximal integer $k$ such that there exist cohomology classes ${u_i\in H^{\ast}(X;R)\otimes H^{\ast}(X;R)}$ satisfying $\cup(u_i)=0$ for all $1\leq i \leq k$ and $\prod_{i=1}^{k} u_i\neq 0$. 
This integer is denoted by $\zl_{R}(X)$. It was proved in \cite[Theorem 7]{Far} that $\zl_{R}(X)$ gives a lower bound for the $\TC(X)$. 
That is, we have the following;
\begin{equation}\label{eq:lscat_tc_lbd}
 \ct(X)\geq \cl_{R}(X)+1 ~~ \text{ and } ~~ \TC(X)\geq \zl_{R}(X) +1.
\end{equation}

Several properties of generalized Bott manifolds have been studied. However, little is known about their real counterpart. Briefly, A \emph{generalized real Bott tower} of height $m$ is a sequence 
\begin{equation} \label{bott_tower}
   B_m \xrightarrow{\pi_m} B_{m-1} \xrightarrow{\pi_{m-1}} \cdots \xrightarrow{\pi_2} B_1 \xrightarrow{\pi_1} B_0= \lbrace \mbox{pt} \rbrace  
\end{equation}
of manifolds $B_j= \mathbb{P}(\underline{\mathbb{R}} \oplus E_j^{(1)} \oplus \cdots \oplus E_j^{(n_j)} )$, where $\underline{\mathbb{R}}$ is the trivial line bundle over $B_{j-1}$, $E_j^{(i)}$ is a real line bundle over $B_{j-1}$ for $i=1,\dots, n_j$, and $j= 1,\dots, m$. Here $\mathbb{P}(\cdot)$ denotes the projectivization. The space $B_j$ is called a $j$-th stage generalized real Bott manifold. Note that the fiber of the map $\pi_j$ is $\R P^{n_j}$ for $j=1, \ldots, m$. In particular, $B_1 = \R P^{n_1}$. Also, when $n_j= 1$ for every $j \in \{1, \ldots, m\}$, then $B_j$ is called an $j$th-stage real Bott manifold.  Interestingly, generalized real Bott manifolds are small covers. We note that the small covers are central objects in Toric Topology introduced by Davis and Januszkiewicz \cite{DJ}. They are smooth manifolds with effective locally standard real torus actions and have rich combinatorial properties. The concept of small covers has been extended in \cite{LM2} where the author called them $2$-torus manifolds. If the real torus action on a $2$-torus manifold is locally standard and the orbit space is a nice manifold with corners, we may call the $2$-torus manifold a `real torus manifold'. Following \cite{LM2}, one can get a combinatorial pair $(P, \lambda)$ from a real torus manifold where $P$ is the orbit space which is a nice manifold with corners, and $\lambda$ is called a $\Z_2$-characteristic function on $P$. We refer to \cite{DJ, LM2, KZ16} for several topological information on small covers and real torus manifolds. We note that the article \cite{brahma2023various} calculated the LS-category of a small cover and studied the topological complexity of a specific subclass of generalized real Bott manifolds.

Farber, Tabachnikov and Yuzvinsky \cite{FTY} showed a remarkable relation between the topological complexity of a real projective space and its immersion dimension. The topological complexity of any generalized real Bott manifolds, which are smooth projective real toric manifolds, is unknown until now. We note that a real torus manifold is a finite quotient of a real moment angle manifold that has several similar topological properties with the same dimensional sphere. 
Therefore, we think that the following question is interesting and challenging. 
\begin{question}
 Is the topological complexity of a real torus manifold $M$ equal to its immersion dimension? 
\end{question} 

The other class of manifolds we are interested in are called \emph{generalized projective product spaces}. These manifolds were recently introduced by Sarkar and Zvengrowski in \cite{SZ22}, extending the concept of (generalized) Dold manifolds \cite{Dol56, NS} and projective product spaces \cite{Davis}. Let $N$ be a manifold equipped with a free $\Z_2$-action and  $M$ a $\Z_2$-manifold. Then the diagonal $\Z_2$-action on the product $M \times N$ is free. Then the orbit space $D(M, N):=(M \times N)/ \Z_2$ is a manifold. In particular, if $M$ is a real torus manifold, we call $D(M, N)$ a \emph{Dold manifold of real torus type}. Several properties of these manifolds remain to explore for many $M$ and $N$. The goal of this article is to compute the LS-category and topological complexity of real torus manifolds (which contain generalized real Bott manifolds and small covers) and Dold manifolds of real torus type. We explain the details of this in the organization.

In Section \ref{sec_lscat_tc_smcov}, we recall some basic topological properties of real torus manifolds. We prove that the LS-category of a real torus manifold $M$ is $\dim(M)+1$ if the orbit space contains a boundary of a simple polytope, see \Cref{thm: ct rTmfd}. If $M$ is an $n$-dimensional real torus manifold such that a facet of the orbit space is an $(n-1)$-simplex, then the lower bound on the topological complexity of $M$ can be given by the zero-divisors-cup-length of $\R P^{n-1}$, see \Cref{thm: tcmpchi}. In addition, if the $\Z_2$-characteristic function $\lambda$ associated with $M$ satisfies certain conditions, then the lower bound on the topological complexity of $M$ can be given by the zero-divisors-cup-length of $\R P^{n}$, see \Cref{thm: tcmpchi}. Then, we show that a certain product of a generating set of $H^{*}(M(P, \lambda); \Z_2)$ is non-zero if $P$ is a finite product of simplices, see \Cref{yj_nj_neq_0}. As a result, we use cohomological methods to obtain a sharp lower bound on the topological complexity of generalized real Bott manifolds, see \Cref{highertc_small_cover_special} and \Cref{cor: tc lb}. Then we prove that if the product of simplices contains $k$-many odd dimensional simplices, then the dimensional upper bound on the topological complexity of generalized real Bott manifolds can be improved by $k$, see \Cref{thm_some_odd_fact}. 

In \Cref{sec: cat tc drs}, we compute the $\Z_2$-equivariant LS-category of small covers when the fixed point sets are path connected, see \Cref{thm: eqctsmallcover}. For the other kinds of $\Z_2$-actions, when fixed point sets are not connected, we show that the $\Z_2$-equivariant category of small covers could be much greater than the dimension plus one, see \Cref{thm: eq cat}. In general, it remains less than the number of vertices of the corresponding polytope. \Cref{exm: eqcatM} observes that, in certain cases, it can be exactly the number of vertices of the polytope. Then we compute the LS-category of Dold manifolds of real torus type and obtain sharp bounds on their topological complexity, see \Cref{thm: ctDmp} and \Cref{thm: tcDmp}. In many specific cases, the topological complexity of these manifolds is either $a-1$, $a$, or $a+1$ for some $a \in \mathbb{N}$, see \Cref{cor: tcdm}. As an application, we obtain sharp bounds on the symmetric topological complexity of a class of real torus manifolds and Dold manifolds of real torus type (see \Cref{thm: symtcmpchi}, \Cref{thm: sytcexact}, and \Cref{thm: symtcdmp}). Moreover, in \Cref{thm: sytcexact}, we compute the exact value of the symmetric topological complexity of many real torus manifolds. 


\section{LS-category and topological complexity of real torus manifolds}\label{sec_lscat_tc_smcov}
In this section, we recall the concepts of real torus manifolds and generalized real Bott manifolds. We compute the LS-category of a class of real torus manifolds containing small covers and give some tight bounds for the topological complexity of certain real torus manifolds. We describe a nice presentation of the cohomology ring of generalized real Bott manifolds with $\Z_2$-coefficients. Then, we obtain sharp bounds for the topological complexity of these manifolds.

We recall the definition of a nice manifold with corners following \cite{Da} and \cite{BS}. The codimension function $c$ associates to a point $x=(x_1,\dots,x_n)\in (\R_{\geq 0})^n$ the number of $x_i$ which are zero. An $n$-dimensional manifold with corners is a Hausdorff, second-countable topological space together with a maximal atlas of local charts onto open subsets of $(\R_{\geq 0})^n$ such that the overlapping maps are homeomorphisms that preserve codimension function.
In an $n$-dimensional manifold with corners, a face of codimension $n$ and $1$ are called a vertex and a facet, respectively.
The vertex set and the facet set of $P$ are denoted by $V(P)$ and $\mathcal{F}(P)$, respectively.

A manifold with corners $P$ is called nice if, for every $p\in P$ with $c(p) =2$, the number of facets of $P$ that contain $p$ is also $2$. Therefore, a codimension-$k$ face of a nice manifold with corners $P$ is a connected component of the intersection of the unique collection of $k$ many facets of $P$. In particular, if a convex polytope is nice, as in the above sense, it is called a simple polytope.

Consider $\R^n$ with standard $\Z_2$-action induced by the reflections on coordinate hyperplanes. 
A smooth action of $\Z_2^n$ on an $n$-dimensional smooth manifold $M$ is called \emph{locally standard}, if for all $x\in M$ there exists a $\Z_2^n$-invariant open neighborhood $U_x$ and a diffeomorphism $\phi \colon U_x\to V$, where $V$ is $\Z_2^n$-invariant open set in $\R^n$, and an isomorphism $\delta_x: \Z_2^n\to \Z_2^n$ such that $\phi(t\cdot x)=\delta_x(t)\cdot \phi(x)$ for all $t\in \Z_2^n$ and $x\in U_x$.

\begin{definition}
A closed, connected, and smooth $n$-dimensional manifold 
$M$ with locally standard action of $\Z_2^n$ is called a real torus manifold over a nice manifold with
corners $P$ of dimension $n$, if the following conditions are satisfied:
\begin{enumerate}
\item The boundary $ \partial P$ is non-empty.
\item There is a projection map $\mathfrak{q} \colon M \to P$ constant on orbits which maps every
$\ell$-dimensional orbit to a point in the interior of an $\ell$-dimensional face of $P$.
\end{enumerate}
\end{definition}

If $P$ is a simple polytope, then the real torus manifold $M$ is called a small cover. 
Let $P$ be a nice manifold with corners, and $\mathcal{F}(P):= \{F_1, \ldots, F_r\}$. 

\begin{definition} \label{Def 2.1}
A function $\lambda \colon \mathcal{F}(P) \rightarrow \mathbb{Z}_2^n $ is called a characteristic function if the submodule of $\mathbb{Z}_2^n$ generated by $\lbrace \lambda(F_{i_1}),\ldots, \lambda(F_{i_\ell}) \rbrace$ is an $\ell$-dimensional direct summand of $\mathbb{Z}_2^n$ whenever $F_{i_1} \cap \dots \cap F_{i_\ell} \neq \emptyset$.
The vector $\lambda_i:= \lambda(F_i)$ is called the characteristic vector associated with the facet $F_i\in \mathcal{F}(P)$, and the pair $(P, \lambda)$ is called a characteristic pair.
\end{definition}

We discuss the construction of a real torus manifold from a characteristic pair $(P, \lambda)$.
For each point $p \in P$, let $F(p)$ be the unique face of $P$, which contains $p$ in its relative interior. Then $F(p)$ is a connected component of $F_{i_1} \cap \cdots \cap F_{i_k}$ for some unique facets $F_{i_1},\ldots, F_{i_k}$ of $P$. Let $G_{F(p)}$ be the subgroup of $\mathbb{Z}_2^n$ generated by $\{\lambda(F_{i_1}),\ldots,\lambda(F_{i_k})\}$. We define an equivalence relation on $P \times \mathbb{Z}_2^n$ as follows:
\begin{equation}\label{eq_sim}
(p,g) \backsim (q,h) \Leftrightarrow p=q, g^{-1}h \in G_{F(p)}.
\end{equation}
The identification space $M^n(P,\lambda): =( P\times \Z_2^n ) / \sim$ has an $n$-dimensional manifold structure with a natural locally standard $\Z_2^n$-action induced by the group operation on the second factor of $P\times \Z_2^n$. Thus $M^n(P, \lambda)$  is a real torus manifold over $P$ (see \cite{DJ} for details).
 The projection onto the first factor gives the orbit map
\begin{equation}\label{orbit map}
\mathfrak{q} \colon M^n(P,\lambda)  \to P \text{ defined by } [p,g]_{\sim} \mapsto p \nonumber,
\end{equation}
where $[p,g]_{\sim}$ is the equivalence class of $(p,g)$. 
\begin{example}\label{ex: susp pol}
Let $Q=\{(x_1, \ldots, x_n, x) \in \R^{n+1} ~|~ \sum_1^n x_i^2 +x^2 =1, x_i \geq 0 ~\mbox{for} ~ i=1, \ldots, n\}$ for $n \geq 2$. Then $Q$ is a nice manifold with corners homeomorphic to a suspension on $(n-1)$-simplex. Note that $Q$ is not a simple polytope.
The facet set of $Q$ is $\mathcal{F}(Q) =\{F_1,\dots,F_n\}$, where $$F_i := \{(x_1, \ldots, x_n, x) \in Q ~|~ x_i=0 ~\mbox{for}~ i=1, \ldots, n\}.$$ 
Define $\lambda \colon \mathcal{F}(Q) \to \Z_2^n$ by $\lambda(F_i)=(1,\dots,1,-1,1,\dots,1)$, where $-1$ is at the $i$-th component. Then $M^n(Q, \lambda)$ is a real torus manifold homeomorphic to $S^n$, which is not a small cover. 
\end{example}

Let $M$ be a real torus manifold over a nice manifold with corners $P$ of dimension $n$, and $\mathfrak{q} \colon M \to P$ be the orbit map. Let $\mathcal{F}(P) =\{F_1, \ldots, F_r\}$. Then the isotropy subgroup of $\mathfrak{q}^{-1}(F_i)$ is a subgroup of $\Z_2^n$ and isomorphic to $\Z_2$. This isotropy subgroup is uniquely determined by $\lambda_i \in \Z_2^n \setminus \{(1, \ldots, 1)\}$ for $i=1, \ldots, m$. Thus, we have an assignment $\lambda \colon \mathcal{F}(P) \to \Z_2^n$ satisfying Definition \ref{Def 2.1}. Therefore, by \cite[Lemma 3.1]{LM2}, we have the following. 

\begin{proposition}\label{prop_equivalence}
There is an equivariant homomorphism from $M^n(P, \lambda)$ onto $M$.
\end{proposition}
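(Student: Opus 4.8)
The plan is to produce an explicit equivariant map and verify it descends through the identification defining $M^n(P,\lambda)$. First I would recall that, by construction, $M^n(P,\lambda) = (P\times\Z_2^n)/\!\sim$ where the equivalence relation \eqref{eq_sim} collapses the $\Z_2^n$-factor by the subgroup $G_{F(p)}$ generated by the characteristic vectors of the facets whose intersection contains $p$ in its relative interior. On the other hand, $M$ is a real torus manifold over the same $P$ with the same characteristic data: as explained just before the statement, the isotropy group of $\mathfrak q^{-1}(F_i)\subseteq M$ is precisely the $\Z_2$ generated by $\lambda_i$, and more generally the isotropy of a point of $M$ lying over a face $F(p)$ is exactly $G_{F(p)}$. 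This is the key input, and it is essentially Davis--Januszkiewicz's reconstruction lemma adapted to nice manifolds with corners (cf. \cite[Lemma 3.1]{LM2}, which the excerpt already invokes).

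Next I would fix a point $x_0\in M$ lying over a chosen vertex (or more robustly, choose a section $s\colon P\to M$ of $\mathfrak q$ on a contractible set, or just use that $\mathfrak q$ admits a set-theoretic section) and define $\Phi\colon P\times\Z_2^n\to M$ by $\Phi(p,g) = g\cdot s(p)$, where $s\colon P\to M$ picks out one preimage of each $p$ (for instance $s(p)$ in the "fundamental domain" $\mathfrak q^{-1}(P)$ realized by a fixed copy of $P$ inside $M$, which exists because the action is locally standard). The map $\Phi$ is continuous (this uses that locally $M$ looks like $\R^n$ with the standard reflection action over $(\R_{\ge0})^n$, so the chosen section is continuous), and it is $\Z_2^n$-equivariant by construction since $\Phi(p,hg) = hg\cdot s(p) = h\cdot\Phi(p,g)$. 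It is surjective because every point of $M$ lies in some orbit and every orbit meets the fundamental domain. Then I would check that $\Phi$ is constant on $\sim$-classes: if $(p,g)\sim(q,h)$ then $p=q$ and $g^{-1}h\in G_{F(p)}$, and since the isotropy subgroup of $s(p)\in M$ is exactly $G_{F(p)}$ (by the identification of characteristic data above), we get $h\cdot s(p) = g\cdot (g^{-1}h)\cdot s(p) = g\cdot s(p)$, i.e. $\Phi(p,g)=\Phi(p,h)$. Hence $\Phi$ factors through an equivariant continuous surjection $\bar\Phi\colon M^n(P,\lambda)\to M$, which is what is claimed.

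The main obstacle I anticipate is the continuity and well-definedness of the section $s$, equivalently showing that $M$ genuinely decomposes as the identification space associated to its own characteristic pair. One cannot simply invoke a global section of $\mathfrak q$, since such a section need not be continuous a priori; the honest argument is local, using the locally standard charts to see that over a chart the orbit map is modeled on $(\R_{\ge0})^n\times\Z_2^n \to (\R_{\ge0})^n$ and the local picture of $M$ already agrees with that of $M^n(P,\lambda)$, then patching these local identifications. This is precisely the content of \cite[Lemma 3.1]{LM2}, so in the write-up I would state that we apply that lemma to the characteristic pair $(P,\lambda)$ extracted from $M$ in the preceding paragraph, yielding the equivariant homeomorphism (hence in particular an equivariant surjection) $M^n(P,\lambda)\to M$. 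The remaining verifications — equivariance and compatibility with $\mathfrak q$ — are then the routine diagram-chase sketched above.
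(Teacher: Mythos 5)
Your argument ultimately reduces to invoking \cite[Lemma 3.1]{LM2}, which is exactly what the paper does: its entire ``proof'' of this proposition is the citation placed in the sentence immediately preceding the statement, after extracting the characteristic function $\lambda$ from the isotropy data of $M$. Your preliminary sketch via a set-theoretic section $s$ and the map $\Phi(p,g)=g\cdot s(p)$ is a reasonable heuristic, and you correctly flag that its continuity is not automatic and that the honest argument is the local patching supplied by the cited lemma, so the approach coincides with the paper's.
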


Now, we will compute the LS-category of real torus manifolds over the manifolds with corners and sharp bounds on their topological complexity.

\begin{theorem}\label{thm: ct rTmfd}
Let $M$ be an $n$-dimensional real torus manifold over $P$. If a connected component of $\partial P$ is a boundary of an $n$-dimensional simple polytope $Q$, then $$\ct(M) =n+1.$$
\end{theorem}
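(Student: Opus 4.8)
The plan is to prove the two inequalities $\ct(M)\le n+1$ and $\ct(M)\ge n+1$ separately. The upper bound needs no hypothesis: $M$ is a closed connected smooth manifold of dimension $n$, hence a CW complex of dimension $n$, so $\ct(M)\le\dim M+1$ (see \cite{CLOT}). For the lower bound I will invoke the cohomological estimate \eqref{eq:lscat_tc_lbd}, which reduces the problem to showing that the $\Z_2$-cup-length satisfies $\cl_{\Z_2}(M)\ge n$; since $\dim M=n$, this amounts to exhibiting $n$ degree-one classes in $H^{*}(M;\Z_2)$ whose product is the generator of $H^{n}(M;\Z_2)\cong\Z_2$.

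To produce such classes I will use the hypothesis to single out a good vertex, and then appeal to the characteristic submanifolds of $M$. Write $C$ for the connected component of $\partial P$ identified, compatibly with the face structures, with $\partial Q$ for an $n$-dimensional simple polytope $Q$. Since $\partial P$ is the union of the facets of $P$ and each facet is connected, $C$ is a union of certain facets of $P$, and under $C\cong\partial Q$ these are precisely the facets of $Q$; in particular every vertex of $Q$ is a vertex of $P$. Fix a vertex $v$ of $Q$. As $Q$ is simple, $v$ lies in exactly $n$ facets $F_{i_1},\dots,F_{i_n}$ of $Q$ and $F_{i_1}\cap\cdots\cap F_{i_n}=\{v\}$ inside $\partial Q$; because each $F_{i_j}$ is contained in $C$, the same intersection computed inside $P$ is still exactly $\{v\}$, and $F_{i_1},\dots,F_{i_n}$ are the $n$ facets of $P$ through $v$. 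Put $M_j:=\mathfrak{q}^{-1}(F_{i_j})$, a closed codimension-one (characteristic) submanifold of $M$, and let $\alpha_j\in H^1(M;\Z_2)$ be its mod-$2$ Poincar\'e dual. Local standardness of the $\Z_2^n$-action near the point $\mathfrak{q}^{-1}(v)$ models $\mathfrak{q}$ on $\R^n\to\R^n_{\ge 0}$ and the $M_j$ on the $n$ coordinate hyperplanes; hence $M_1,\dots,M_n$ meet mutually transversally, and their common intersection is $\mathfrak{q}^{-1}(F_{i_1}\cap\cdots\cap F_{i_n})=\mathfrak{q}^{-1}(v)$, a single point (as $\mathfrak{q}^{-1}$ of a vertex, this holds for $M^n(P,\lambda)$ by construction, hence for $M$ by \Cref{prop_equivalence}). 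An iterated application of the identity $\mathrm{PD}[A]\cup\mathrm{PD}[B]=\mathrm{PD}[A\cap B]$ for transverse closed submanifolds then gives $\alpha_1\cup\cdots\cup\alpha_n=\mathrm{PD}\big[\mathfrak{q}^{-1}(v)\big]$, the generator of $H^n(M;\Z_2)$, so $\cl_{\Z_2}(M)\ge n$ and $\ct(M)\ge n+1$.

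The step I expect to be the crux — and the only place the simple-polytope hypothesis is genuinely used — is the assertion that some $n$ facets of $P$ intersect in a single point. A general nice manifold with corners need not have any vertices (e.g.\ $S^{n-1}\times[0,1]$), and even when it does, the facets through a vertex might meet at further vertices as well, in which case the corresponding mod-$2$ product of Poincar\'e duals would vanish; convexity of $Q$ is what forces the intersection of the facets through $v$ to be connected, hence the single vertex $v$, and the fact that these facets all lie in the one component $C$ is what transports this statement unchanged from $\partial Q$ to $P$. The remaining points are routine: that $\mathfrak{q}^{-1}(v)$ is a single point and that the $M_j$ are embedded closed submanifolds meeting transversally there — both consequences of local standardness together with \Cref{prop_equivalence} — so that the intersection-product formula for mod-$2$ Poincar\'e duals applies, and that $H^n(M;\Z_2)\cong\Z_2$ is generated by the class of a point because $M$ is closed and connected.
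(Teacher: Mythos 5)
Your proof is correct and follows essentially the same route as the paper: pick a vertex $v$ of the simple polytope $Q$, use simplicity to see that the $n$ facets through $v$ intersect transversally in the single point $v$, take the mod-$2$ Poincaré duals of the corresponding characteristic submanifolds, and show their product is the generator of $H^n(M;\Z_2)$, giving $\cl_{\Z_2}(M)\ge n$; combine with the dimensional upper bound. Your write-up spells out a couple of points the paper leaves implicit — that the facets all lying in the single boundary component $C\cong\partial Q$ transports the single-vertex intersection from $Q$ to $P$, and that $\mathfrak{q}^{-1}(v)$ is a point via \Cref{prop_equivalence} — but the argument is the same.
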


\begin{proof}
Let $v$ be a vertex in $\partial Q \subseteq \partial P$. So there exist unique facets $F_1, \ldots, F_n$ of $P$ such that $v = F_1 \cap \cdots \cap F_n$ and $F_{i_1} \cap \cdots \cap F_{i_k}$ is an $(n-k)$-dimensional simple polytope for any $\{i_1, \ldots, i_k\} \subseteq \{1, \ldots, n\}$. Let $\mathfrak{q} \colon M \to P$ be the orbit map and $x_i$ the Poincare dual of $\mathfrak{q}^{-1}(F_i)$. It follows from the given hypotheses on $P$, that the intersection $F_1 \cap \cdots \cap F_n$ is transversal. Thus, the product $x_1 \cdots x_n$ of cohomology classes is nonzero in $H^*(M; \Z_2)$. Therefore, the result follows from the cup-length lower bound  \eqref{eq:lscat_tc_lbd} and the dimensional upper bound as given in \cite[Theorem 1.7]{CLOT}. 
\end{proof}

\begin{theorem}\label{prop: lb tcmpchi}
Let $M$ be an $n$-dimensional real torus manifold over $P$ with $n >1$. Suppose $\Delta^{n-1}$ is a facet of $P$. Then, 
\begin{equation}\label{eq: zclmp}
 n \leq \ct(M) \leq n+1, ~ \mbox{and}~ n \leq \zl_{\Z_2}(\R P^{n-1}) +1\leq \TC(M)\leq 2n+1.
\end{equation}
In particular, if $n=2^s+1$, then $$2^{s+1} \leq\TC(M)\leq 2^{s+1}+3.$$
\end{theorem}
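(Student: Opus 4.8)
The plan is to establish the four inequalities in \eqref{eq: zclmp} separately, then specialize. For the bounds $n \le \ct(M) \le n+1$: the upper bound $\ct(M) \le n+1$ is the standard dimensional bound \cite[Theorem 1.7]{CLOT} since $\dim M = n$. For the lower bound $\ct(M) \ge n$, I would use the facet $\Delta^{n-1}$ of $P$. Since $\Delta^{n-1}$ is an $(n-1)$-simplex, it is a simple polytope of dimension $n-1$, and its $n$ facets $G_1, \dots, G_n$ together with their nonempty intersections are all simplices (hence simple polytopes), meeting transversally. Taking $\mathfrak{q} \colon M \to P$ the orbit map and letting $y_j$ be the Poincar\'e dual of $\mathfrak{q}^{-1}(G_j)$ (viewed inside $M$ via the facet $\Delta^{n-1}$ and the submanifold $\mathfrak{q}^{-1}(\Delta^{n-1})$), a transversality argument as in the proof of \Cref{thm: ct rTmfd} shows the product $y_1 \cdots y_{n-1}$ of $n-1$ one-dimensional classes is nonzero in $H^*(M;\Z_2)$; actually one can do slightly better by noting $\mathfrak{q}^{-1}(\Delta^{n-1})$ carries a copy of the cohomology of a small cover over $\Delta^{n-1}$, which is $\R P^{n-1}$, so $\cl_{\Z_2}(M) \ge \cl_{\Z_2}(\R P^{n-1}) = n-1$, giving $\ct(M) \ge n$ by \eqref{eq:lscat_tc_lbd}. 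The upper bound $\TC(M) \le 2\dim M + 1 = 2n+1$ is the dimensional bound for topological complexity.

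The heart of the matter is the lower bound $\TC(M) \ge \zl_{\Z_2}(\R P^{n-1}) + 1$. The idea is that the facet $\Delta^{n-1}$ produces, inside $H^*(M;\Z_2)$, a polynomial subring behaving like $H^*(\R P^{n-1};\Z_2) = \Z_2[\alpha]/(\alpha^n)$: concretely, the Poincar\'e duals of the $\mathfrak{q}$-preimages of the facets of the simplex $\Delta^{n-1}$, say classes $y_1,\dots,y_n \in H^1(M;\Z_2)$, satisfy a single relation identifying them up to a common class $\alpha$ with $\alpha^n = 0$ but $\alpha^{n-1} \ne 0$, mirroring the small cover $\R P^{n-1} = M(\Delta^{n-1}, \lambda|)$. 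Then, setting $\bar\alpha = \alpha \otimes 1 + 1 \otimes \alpha \in H^*(M;\Z_2)\otimes H^*(M;\Z_2)$, we have $\cup(\bar\alpha) = 0$ (it is a zero-divisor), and $\bar\alpha^{2(n-1)} \ne 0$ by the standard computation for $\R P^{n-1}$ transplanted into $M$ via the ring map to $H^*(\R P^{n-1};\Z_2)$ induced by the inclusion of the invariant submanifold $\mathfrak{q}^{-1}(\Delta^{n-1})$. Hence $\zl_{\Z_2}(M) \ge \zl_{\Z_2}(\R P^{n-1})$, and \eqref{eq:lscat_tc_lbd} gives $\TC(M) \ge \zl_{\Z_2}(\R P^{n-1}) + 1$. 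I expect the main obstacle here to be verifying that the relevant classes in $H^*(M;\Z_2)$ genuinely detect the $\R P^{n-1}$-subalgebra — i.e., that the cohomology of $M$ restricted along the facet does not collapse the powers of $\alpha$ prematurely — which requires the transversality/Poincar\'e-duality bookkeeping from the $(P,\lambda)$-construction, and this is where one must be careful that $\Delta^{n-1}$ being merely a facet (not all of $P$) still forces $\alpha^{n-1}\ne 0$.

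Finally, for the specialization: when $n = 2^s + 1$, the classical computation of Farber--Tabachnikov--Yuzvinsky gives $\zl_{\Z_2}(\R P^{n-1}) = \zl_{\Z_2}(\R P^{2^s}) = 2^{s+1} - 1$ (the zero-divisor-cup-length of $\R P^m$ equals $2^{\lceil \log_2(m+1)\rceil+1}\!-\!2$ adjusted to the value $2^{s+1}-1$ when $m = 2^s$, since $\R P^{2^s}$ immerses in $\R^{2^{s+1}-1}$ but not lower). Substituting into \eqref{eq: zclmp} yields the lower bound $\TC(M) \ge \zl_{\Z_2}(\R P^{n-1}) + 1 = 2^{s+1}$, and the upper bound becomes $\TC(M) \le 2n+1 = 2^{s+1}+3$, which is exactly the claimed $2^{s+1} \le \TC(M) \le 2^{s+1}+3$.
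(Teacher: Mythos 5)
Your approach is essentially the paper's: identify the characteristic submanifold $M_{\Delta^{n-1}}$ over the facet with $\R P^{n-1}$, observe that the inclusion $\iota \colon M_{\Delta^{n-1}} \hookrightarrow M$ induces a surjection $\iota^* \colon H^*(M;\Z_2) \to H^*(\R P^{n-1};\Z_2)$, transfer the cup-length and zero-divisors-cup-length lower bounds through this surjection, use the dimensional upper bounds, and specialize via the Farber--Tabachnikov--Yuzvinsky computation for $\R P^{2^s}$.

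One concrete slip: the claim that $\bar\alpha^{2(n-1)} \neq 0$ in $H^*(\R P^{n-1};\Z_2)\otimes H^*(\R P^{n-1};\Z_2)$ is false for every $n>1$. Writing $\bar\alpha^k = \sum_i \binom{k}{i}\,\alpha^i\otimes\alpha^{k-i}$, the only potentially surviving term of $\bar\alpha^{2(n-1)}$ is $i=n-1$, but $\binom{2(n-1)}{n-1}$ is even whenever $n-1\geq 1$, so $\bar\alpha^{2(n-1)}=0$; the maximal nonvanishing power is $\bar\alpha^{2^t-1}$ where $2^{t-1}\leq n-1 < 2^t$, which is exactly what FTY computes. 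In particular $\zl_{\Z_2}(\R P^{n-1})$ is not $2(n-1)$ in general. This does not break your argument, because the conclusion you actually use afterward, namely $\zl_{\Z_2}(M)\geq \zl_{\Z_2}(\R P^{n-1})$, follows purely from surjectivity of $\iota^*$: given zero-divisors $u_1,\dots,u_k$ with $\prod u_i\neq 0$ over $\R P^{n-1}$, lift each through $\iota^*\otimes\iota^*$ and replace a lift $v_i$ by $v_i - \cup(v_i)\otimes 1$ to land in $\ker(\cup)$ without changing the image. Relatedly, you do not need (and should not assert) that $H^*(M;\Z_2)$ contains a subring isomorphic to $\Z_2[\alpha]/(\alpha^n)$; the surjection $\iota^*$ is the correct and sufficient statement, and it is what the paper uses. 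Your final specialization, $\zl_{\Z_2}(\R P^{2^s})=2^{s+1}-1$ giving $2^{s+1}\leq\TC(M)\leq 2^{s+1}+3$, is correct.
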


\begin{proof}
The characteristic submanifold $M_{\Delta^{n-1}}$ corresponding to the facet $\Delta^{n-1}$ is equivariantly homeomorphic to $\R P^{n-1}$. Then the natural inclusion $\iota \colon M_{\Delta^{n-1}} \hookrightarrow M(P, \lambda)$ induces a surjective homomorphism $$\iota^* \colon  H^*(M; \Z_2) \to H^*(\R P^{n-1};\Z_2).$$ Precisely, if $F_i$ is the $i$-th facet of $P$ and $x_i \in H^{1}(M;\Z_2)$ corresponds to $F_i$, then $\iota^*(x_i) \in H^1(\R P^{n-1}; \Z_2)$ corresponds to $F_i \cap \Delta^{n-1}$. Note that  $F_i \cap \Delta^{n-1}$ is empty then $\iota^*(x_i)=0$. Since $\iota^*$ is surjective, we have $$\cl_{\Z_2}(\R P^{n-1})\leq \cl_{\Z_2}(M), ~ \mbox{and} ~\zl_{\Z_2}(\R P^{n-1})\leq \zl_{\Z_2}(M).$$ Thus, the left inequality of \eqref{eq: zclmp} follows. The right inequality of \eqref{eq: zclmp} follows from \cite[Theorem 4]{Far}.

If $n=2^s +1$, then $\zl_{\Z_2}(\R P^{n-1}) \geq 2^{s+1}-1$, see \cite{FTY}. So, the particular claim follows. 
\end{proof}

\begin{theorem}\label{thm: tcmpchi}
Let $M$ be an $n$-dimensional real torus manifold over $P$ with $n >1$ and $\Delta^{n-1}$ a facet of $P$. Suppose $F_1, \ldots, F_n$ are the facets of $P$ such that $\Delta^{n-1} \cap F_i \neq \emptyset$ and $\det[\lambda(F_1) \cdots \lambda(F_n)] =1$. Then $\ct(M) = n+1$, and 
\begin{equation}\label{eq: zclmp2}
  \zl_{\Z_2}(\R P^{n}) +1\leq \TC(M)\leq 2n +1.
\end{equation}
In particular, if $n=2^s$, then $$2^{s+1}\leq\TC(M)\leq 2^{s+1}+1.$$
\end{theorem}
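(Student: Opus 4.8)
The plan is to bootstrap from \Cref{prop: lb tcmpchi} by upgrading the characteristic submanifold computation one dimension up. In the proof of \Cref{prop: lb tcmpchi} we used the facet $\Delta^{n-1}$ to get a surjection $H^*(M;\Z_2)\to H^*(\R P^{n-1};\Z_2)$ and hence $\zl_{\Z_2}(\R P^{n-1})\le \zl_{\Z_2}(M)$. Here the extra hypothesis $\det[\lambda(F_1)\cdots\lambda(F_n)]=1$ is exactly the condition ensuring that the orbit map preimage of $\Delta^{n-1}$ together with its ambient structure behaves like $\R P^n$: more precisely, I would argue that the vertex $v=F_1\cap\cdots\cap F_n$ of $\Delta^{n-1}$ contributes a transversal intersection, so that the product $x_1\cdots x_n$ of the Poincar\'e duals is nonzero in $H^n(M;\Z_2)$. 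Combined with the fact that $\Delta^{n-1}$ is an $(n-1)$-simplex (so its $n$ facets $F_i\cap\Delta^{n-1}$ give the usual presentation of $H^*(\R P^{n-1};\Z_2)$), the relation $\det=1$ forces the restriction map onto a copy of $\R P^n$'s cohomology. The cleanest route: show there is a subcomplex (or a map out of $M$) realizing $\R P^n$ with $H^*(\R P^n;\Z_2)\to H^*(M;\Z_2)$ a split injection in the relevant degrees, equivalently a surjection $H^*(M;\Z_2)\twoheadrightarrow H^*(\R P^n;\Z_2)$ sending each $x_i$ appropriately.

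Concretely, first I would recall that $\Delta^{n-1}\subset P$ being a facet means the characteristic submanifold $M_{\Delta^{n-1}}\cong\R P^{n-1}$, and that the normal bundle of $M_{\Delta^{n-1}}$ in $M$ is the real line bundle determined by $\lambda(\Delta^{n-1})$. The determinant-one condition on $F_1,\dots,F_n$ (the facets meeting $\Delta^{n-1}$) says that $\{\lambda(F_1),\dots,\lambda(F_n)\}$ is a basis of $\Z_2^n$, and in this basis $\lambda(\Delta^{n-1})$ becomes the all-ones vector; this is precisely the situation in which the pair $(\Delta^{n-1}\cup(\text{its cone direction}),\lambda)$ reconstructs $\R P^n$ (compare \Cref{ex: susp pol}, where the suspension of $\Delta^{n-1}$ with the all-ones-type characteristic data gives $S^n$, and note that with a simplex replaced appropriately one lands on $\R P^n$). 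From this one extracts a map $f\colon \R P^n\to M$ with $f^*(x_i)$ equal to the generator of $H^1(\R P^n;\Z_2)$ for $i=1,\dots,n$ and $f^*$ surjective, whence $\cl_{\Z_2}(M)\ge \cl_{\Z_2}(\R P^n)=n$ giving $\ct(M)\ge n+1$, and $\zl_{\Z_2}(M)\ge \zl_{\Z_2}(\R P^n)$ giving the left inequality of \eqref{eq: zclmp2}. Then $\ct(M)=n+1$ follows since the dimensional upper bound gives $\ct(M)\le n+1$, and the right inequality $\TC(M)\le 2n+1$ is again \cite[Theorem 4]{Far}.

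For the ``in particular'' clause with $n=2^s$: the top nonzero zero-divisor power in $H^*(\R P^n;\Z_2)$ is governed by the well-known formula for $\TC(\R P^n)$ (equivalently the immersion-dimension story of \cite{FTY}), and when $n=2^s$ one has $\zl_{\Z_2}(\R P^{2^s})=2^{s+1}-1$, so $\TC(M)\ge 2^{s+1}$; together with $\TC(M)\le 2n+1=2^{s+1}+1$ this pins $\TC(M)\in\{2^{s+1},2^{s+1}+1\}$. I would simply cite \cite{FTY} for the value $\zl_{\Z_2}(\R P^{2^s})=2^{s+1}-1$ rather than re-deriving it.

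The main obstacle I anticipate is making rigorous the claim that the determinant-one hypothesis produces an honest $\R P^n$ mapping into $M$ with the asserted behavior on cohomology — i.e.\ identifying the right sub-object of $(P,\lambda)$. One must be careful that $P$ is only a nice manifold with corners (not necessarily a polytope), so one cannot literally ``cone off'' $\Delta^{n-1}$ inside $P$; instead the argument should be local/algebraic: restrict attention to a neighborhood of the facet $\Delta^{n-1}$, use that $M_{\Delta^{n-1}}\cong\R P^{n-1}$ sits inside $M$ with normal line bundle classified by $\lambda(\Delta^{n-1})$, and show the total space of (the sphere bundle of) this line bundle over $\R P^{n-1}$ — which is $\R P^n$ when the line bundle is the tautological one, exactly what $\det=1$ guarantees — embeds (or maps) into $M$ compatibly with the $x_i$. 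Verifying that $\iota^*(x_i)$ hits the generator for all $i=1,\dots,n$ (and not just a proper subset) is the crux; everything else is a routine invocation of \eqref{eq:lscat_tc_lbd}, \cite[Theorem 1.7]{CLOT}, \cite[Theorem 4]{Far}, and \cite{FTY}.
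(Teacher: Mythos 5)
Your reduction of the theorem to a geometric statement is correct and complete: once you know $H^*(\R P^n;\Z_2)$ injects into $H^*(M;\Z_2)$ as a subring (or, equivalently for these purposes, that there is a degree-preserving surjection the other way), the cup-length bound $\cl_{\Z_2}(M)\ge n$, the zcl bound $\zl_{\Z_2}(M)\ge\zl_{\Z_2}(\R P^n)$, the dimensional upper bounds from \cite{CLOT} and \cite[Theorem 4]{Far}, and the citation to \cite{FTY} for $\zl_{\Z_2}(\R P^{2^s})=2^{s+1}-1$ all assemble exactly as the paper does. The problem is that you explicitly flag the one step that actually requires an argument — producing the copy of $\R P^n$ — as an unresolved ``main obstacle,'' and that step is precisely the content of the paper's proof. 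The paper resolves it by citing the connected-sum decomposition $M\cong M'\#\R P^n$ from Subsection~1.11 of \cite{DJ}: under the hypotheses ($\Delta^{n-1}$ a facet, the $n$ facets $F_1,\dots,F_n$ meeting it have $\det[\lambda(F_1)\cdots\lambda(F_n)]=1$), one can ``blow down'' a neighborhood of $M_{\Delta^{n-1}}$ and exhibit $M$ as a connected sum with an $\R P^n$ summand. From this $H^*(\R P^n;\Z_2)$ embeds as a subring of $H^*(M;\Z_2)$, and everything else follows.

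Your tubular-neighborhood sketch is heading in the right direction — it is the underlying geometry of that connected sum — but there are two inaccuracies to watch. First, the closed disk bundle of the tautological line bundle over $\R P^{n-1}$ is $\R P^n$ with an open $n$-disk removed (not $\R P^n$ itself), and its sphere bundle boundary is $S^{n-1}$; it is the gluing of this disk bundle to the rest of $M$ along $S^{n-1}$ that yields the connected-sum presentation, so you cannot simply ``find $\R P^n$ as a subcomplex.'' Second, the determinant condition on $\lambda(F_1),\dots,\lambda(F_n)$ is what identifies the normal line bundle of $M_{\Delta^{n-1}}$ as tautological and guarantees the gluing produces an honest $\#\,\R P^n$ summand; this needs to be said, not just asserted as ``exactly what $\det=1$ guarantees.'' With those points corrected and the citation to \cite{DJ}~\S1.11 supplied (or the blow-down argument carried out), your proof would coincide with the paper's.
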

\begin{proof}
We can have $M = M' \# \R P^n$ following the discussion in Subsection 1.11 in \cite{DJ}. Then $H^*(\R P^n; \Z_2)$ is a subring of $H^*(M; \Z_2)$.  
Then $n+1 = \cl_{\Z_2}(\R P^n)\leq \cl_{\Z_2}(M) \leq n+1$, and the left inequality of \eqref{eq: zclmp2} follows from the observation $\zl_{\Z_2}(\R P^n)\leq \zl_{\Z_2}(M^n(P,\lambda))$.
The right inequality of \eqref{eq: zclmp2} follows from \cite[Theorem 4]{Far}.

If $n=2^s$, then $\zl_{\Z_2}(\R P^{n}) \geq 2^{s+1}-1$, see \cite{FTY}. So, the particular claim follows. 
\end{proof}

In the remaining, we focus on computing LS-category and topological complexity of generalized real Bott manifolds. We recall the results, which show that these manifolds are small covers over the product of simplices. Eminently, we obtain sharp bounds on their topological complexity.

\begin{proposition}  \cite[Corollary 4.6]{KZ16}
The $j$-th stage generalized real Bott manifold $B_j$ of the tower \eqref{bott_tower} is a small cover over $\prod_{i=1}^j \Delta^{n_i}$ where $\Delta^{n_i}$ is the $n_i$-simplex. 
\end{proposition}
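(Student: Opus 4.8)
The plan is to induct on the height $j$ of the tower \eqref{bott_tower}. The base case $j=1$ is the classical fact that $B_1=\R P^{n_1}$ is a small cover over $\Delta^{n_1}$: the $\Z_2^{n_1}$-action is by sign changes on the homogeneous coordinates, the orbit map is $[y_0:\cdots:y_{n_1}]\mapsto (y_1^2,\dots,y_{n_1}^2)/\sum_k y_k^2\in\Delta^{n_1}$, and the characteristic function sends the facet $\{y_i=0\}$ to $e_i$ for $i=1,\dots,n_1$ and $\{y_0=0\}$ to $e_1+\cdots+e_{n_1}$ (see \cite{DJ}). So I would assume $j\ge 2$ and, writing $N_{j-1}=\sum_{i<j}n_i$, that $B_{j-1}$ is a small cover over $Q_{j-1}:=\prod_{i=1}^{j-1}\Delta^{n_i}$; thus $B_{j-1}$ carries a locally standard $\Z_2^{N_{j-1}}$-action with orbit map $\mathfrak{q}_{j-1}\colon B_{j-1}\to Q_{j-1}$ and some characteristic function $\lambda_{j-1}$.

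For the inductive step I would first put a $\Z_2^{N_j}$-action, $N_j=N_{j-1}+n_j$, on $B_j=\mathbb{P}(\underline{\R}\oplus E_j^{(1)}\oplus\cdots\oplus E_j^{(n_j)})$, in two pieces. Each real line bundle $E_j^{(i)}$ over $B_{j-1}$ is classified by $w_1(E_j^{(i)})\in H^1(B_{j-1};\Z_2)$, and since the $\Z_2$-cohomology of a small cover is generated in degree one by the facet classes, $w_1(E_j^{(i)})$ is a $\Z_2$-combination of the facet classes of $Q_{j-1}$; hence $E_j^{(i)}$ is a tensor product of the canonical line bundles attached to the characteristic submanifolds of $B_{j-1}$, each of which is $\Z_2^{N_{j-1}}$-equivariant, so $E_j^{(i)}$ inherits a $\Z_2^{N_{j-1}}$-equivariant structure. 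This gives a fibrewise-linear $\Z_2^{N_{j-1}}$-action on $\underline{\R}\oplus E_j^{(1)}\oplus\cdots\oplus E_j^{(n_j)}$ covering the action on $B_{j-1}$, hence on $B_j$. Then I would let the remaining $\Z_2^{n_j}$ act fibrewise on $B_j$ by sign changes on the coordinates $y_1,\dots,y_{n_j}$ (with $y_0$ the coordinate of $\underline{\R}$ and $y_i$ that of $E_j^{(i)}$), fixing $y_0$. Together these give a $\Z_2^{N_j}$-action on $B_j$; using local standardness of the $\Z_2^{N_{j-1}}$-action on $B_{j-1}$, local equivariant triviality of the $E_j^{(i)}$, and the standard model of $\R P^{n_j}$ in the fibre, I would check that this action is locally standard.

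Next I would identify the orbit space and the characteristic function. Passing to fibrewise orbit spaces turns the $\R P^{n_j}$-bundle $B_j\to B_{j-1}$ into a $\Delta^{n_j}$-bundle $Q_j\to Q_{j-1}$; since the structure group of $B_j\to B_{j-1}$ reduces to diagonal sign matrices, which act trivially on the standard simplex $\{[y_0:\cdots:y_{n_j}]:y_\ell\ge 0\}$, this simplex bundle is trivial, so $Q_j\cong Q_{j-1}\times\Delta^{n_j}=\prod_{i=1}^{j}\Delta^{n_i}$, a simple polytope, and the induced orbit map $\mathfrak{q}_j\colon B_j\to Q_j$ sends $\ell$-dimensional orbits to interiors of $\ell$-faces (combining this for $\mathfrak{q}_{j-1}$ with the fibrewise small-cover structure). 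Hence $B_j$ is a small cover over $\prod_{i=1}^{j}\Delta^{n_i}$. For completeness I would record its characteristic function: with $\Z_2^{N_j}=\Z_2^{N_{j-1}}\oplus\Z_2^{n_j}$, the facets $F_k\times\Delta^{n_j}$ (for $F_k$ a facet of $Q_{j-1}$) get $(\lambda_{j-1}(F_k),s_k)$, where $s_k\in\Z_2^{n_j}$ records how $\lambda_{j-1}(F_k)$ acts on the fibres of the $E_j^{(i)}$, while the facets $Q_{j-1}\times\{y_i=0\}$ get $(0,e_i)$ for $i=1,\dots,n_j$ and $Q_{j-1}\times\{y_0=0\}$ gets $(0,e_1+\cdots+e_{n_j})$; the block-triangular shape of these vectors, together with the inductive hypothesis on $\lambda_{j-1}$ and the standard $\R P^{n_j}$ computation, shows directly that the characteristic-function axioms hold at every face of $Q_j$ (the relevant determinant factors as a product of two invertible blocks).

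I expect the main obstacle to lie in the inductive step, in two intertwined points: (i) equipping each $E_j^{(i)}$ with a $\Z_2^{N_{j-1}}$-equivariant structure canonically enough that the assembled $\Z_2^{N_j}$-action on $B_j$ is genuinely locally standard, together with reading off the vectors $s_k$ correctly from $w_1(E_j^{(i)})$ and $\lambda_{j-1}$; and (ii) the triviality of the simplex bundle $Q_j\to Q_{j-1}$ and the verification that the orbit map of the constructed action is $\mathfrak{q}_{j-1}$ in the base and the standard simplex map in the fibre. Once these are in hand, the facts that $Q_j$ is a product of simplices and that $\lambda_j$ is a valid characteristic function follow formally from the block-triangular structure and the inductive hypothesis.
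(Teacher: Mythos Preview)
The paper does not supply its own proof of this proposition: it is quoted verbatim as \cite[Corollary 4.6]{KZ16} and used as a black box, so there is no argument in the paper to compare yours against.

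That said, your inductive outline is the standard route to this result and is essentially correct. The base case is exactly the Davis--Januszkiewicz description of $\R P^{n_1}$ as a small cover over $\Delta^{n_1}$. For the inductive step, the two points you flag as obstacles are indeed the only nontrivial ones, and both go through: (i) over a small cover every real line bundle admits a $\Z_2^{N_{j-1}}$-equivariant structure, because $H^1(B_{j-1};\Z_2)$ is spanned by the facet classes and the corresponding ``facet'' line bundles are manifestly equivariant (this is the Davis--Januszkiewicz description of $H^*$; one may equally well argue via the real moment-angle cover $\prod_i S^{n_i}\to B_{j-1}$, on which every such bundle pulls back to a trivial equivariant bundle); and (ii) once the structure group of the $\R P^{n_j}$-bundle is reduced to diagonal sign matrices, passing to fibrewise $\Z_2^{n_j}$-orbits kills the structure group, so the resulting $\Delta^{n_j}$-bundle over $B_{j-1}$ is trivial and the residual $\Z_2^{N_{j-1}}$-action is trivial on the simplex factor, giving $Q_j\cong Q_{j-1}\times\Delta^{n_j}$. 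Your block-triangular description of $\lambda_j$ is then exactly the one the paper records later (in the matrix $\tilde A$ of \eqref{lower_triangular_matrix}), and the characteristic-function axiom at each vertex follows from the invertibility of the two diagonal blocks. So your proposal would constitute a complete proof once (i) is stated with a precise reference; the paper itself simply defers to \cite{KZ16}.
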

The converse statement of this result also holds by the following proposition.
\begin{proposition} \cite[Proposition 2.7]{DU19} \label{small_cover_realbott} 
    Every small cover over a product of simplices is a generalized real Bott manifold.
\end{proposition}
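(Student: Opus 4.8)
Proof proposal for Proposition \ref{small_cover_realbott} (which is cited as \cite[Proposition 2.7]{DU19}, so I'm just sketching how one would prove it).

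The plan is to induct on the number $m$ of factors in the product of simplices $P=\prod_{i=1}^m\Delta^{n_i}$, reconstructing the generalized real Bott tower one stage at a time from the combinatorial data of a small cover $M=M(P,\lambda)$. First I would fix notation for the facets: each factor $\Delta^{n_i}$ has facets $F_i^{(0)},\dots,F_i^{(n_i)}$, and the facets of $P$ are the preimages $\pi_i^{-1}(F_i^{(j)})$ under the projections $\pi_i\colon P\to\Delta^{n_i}$; write $\lambda_i^{(j)}\in\Z_2^n$ (with $n=\sum n_i$) for the characteristic vector of the facet coming from $F_i^{(j)}$. The base case $m=1$ is just the statement that a small cover over a single simplex $\Delta^{n_1}$ is $\R P^{n_1}$ (possibly after a linear change of coordinates on $\Z_2^{n_1}$), which follows because at the unique ``central'' vertex of $\Delta^{n_1}$ the $n_1$ characteristic vectors must form a basis, and the remaining vector $\lambda_1^{(0)}$ is forced (up to the nonsingularity condition at the other vertices) to be the sum of the basis vectors — exactly the characteristic data of $\R P^{n_1}=M(\Delta^{n_1},\lambda)$.

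For the inductive step I would single out the last factor, say $\Delta^{n_m}$, and consider the ``slice'' obtained by restricting to one facet $F_m^{(0)}$ of that factor: $P\cap \pi_m^{-1}(F_m^{(0)})\cong \prod_{i=1}^{m-1}\Delta^{n_i}\times\Delta^{n_m-1}$, which is again a product of simplices with fewer total dimension. The key step is to show the following structural fact about $\lambda$: after an automorphism of $\Z_2^n$, the $n_m$ coordinates ``belonging to'' the last factor split off, the vectors $\lambda_m^{(1)},\dots,\lambda_m^{(n_m)}$ become the standard basis of that $\Z_2^{n_m}$ summand, $\lambda_m^{(0)}$ becomes their sum plus a correction term $\varepsilon\in\Z_2^{n-n_m}$, and the vectors $\lambda_i^{(j)}$ for $i<m$ have their last $n_m$ coordinates determined by a single $\Z_2$-valued function on the facets of $\prod_{i<m}\Delta^{n_i}$. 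This is precisely the statement that $M\to \prod_{i<m}\Delta^{n_i}$-worth of data, call it $M'$, is itself a small cover over a product of $m-1$ simplices (apply the induction hypothesis to get a generalized real Bott tower of height $m-1$), and that $M$ is the real projectivization $\mathbb{P}(\underline{\R}\oplus E^{(1)}\oplus\cdots\oplus E^{(n_m)})$ of a sum of line bundles over $M'$, where each line bundle $E^{(k)}$ is read off from how $\lambda_i^{(j)}$ interacts with the $k$-th last coordinate. Matching the $\Z_2$-characteristic data of this projective bundle with the given $\lambda$ finishes the step.

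The main obstacle is the structural fact in the middle paragraph: proving that the characteristic function of \emph{any} small cover over a product of simplices necessarily has this ``triangular'' shape with respect to the factors. One has to use the nonsingularity condition of Definition \ref{Def 2.1} at enough vertices of $P$ — in particular at vertices that lie over a fixed vertex of $\Delta^{n_m}$ and vary over $\prod_{i<m}\Delta^{n_i}$, and at vertices where the last-factor coordinate is changed — to pin down the submodules generated by the $\lambda_i^{(j)}$ and to choose the global change of basis of $\Z_2^n$ that simultaneously normalizes all of them. The bookkeeping is somewhat delicate because the basis change must be compatible across all $m$ factors at once, but conceptually it is the same argument Davis and Januszkiewicz use to recognize $\R P^n$-bundles, iterated $m$ times; once the normal form is in hand, identifying $M$ with the total space of the Bott tower is routine. (Uniqueness of the face $F(p)$ containing $p$ in its relative interior, and the fact that over a product of simplices every face is again a product of simplices, are used freely throughout.)
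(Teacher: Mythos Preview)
The paper does not prove this proposition at all: it is simply quoted as \cite[Proposition 2.7]{DU19} and used as a black box, so there is no ``paper's own proof'' to compare against. What the paper \emph{does} do, a few lines later, is invoke exactly the structural fact you identify as the main obstacle --- that the characteristic vector matrix $A$ can be brought to a unipotent lower-triangular form $\tilde A$ --- by citing \cite[Proposition 5.1]{CMS10}. So your overall strategy (triangularize the characteristic data, then peel off one projective-bundle stage at a time) is precisely the one underlying the cited results.

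One point in your sketch deserves sharpening. You write that for the inductive step you would ``single out the last factor $\Delta^{n_m}$'', but in general there is no reason the \emph{given} ordering of the simplex factors makes $A$ block-triangular; the content of \cite[Proposition 5.1]{CMS10} is that after a suitable \emph{permutation} of the factors (together with the basis change of $\Z_2^n$ you mention) the matrix becomes triangular. Concretely, one replaces each block $\boldsymbol{\alpha}_i^j$ by $0$ or $1$ according to whether it is the zero vector, obtains an $m\times m$ matrix over $\Z_2$ whose principal minors are all $1$ (this is where the nonsingularity condition at vertices enters), and then uses the fact that such a matrix is permutation-conjugate to a unipotent triangular matrix. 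Without this permutation step your induction could stall: the factor you try to peel off need not sit ``on top''. Your paragraph on the obstacle gestures at the right ingredients but does not mention the permutation, and ``use nonsingularity at enough vertices'' is too vague to stand in for this argument.

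A minor remark on your base case: your claim that $\lambda_1^{(0)}$ is forced to be the sum of the standard basis vectors is correct over $\Z_2$ (the only vector in $\Z_2^{n_1}$ with all coordinates nonzero is $(1,\dots,1)$), but note that this is special to $\Z_2$ and would fail in the complex/integral setting.
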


Let $P= \prod_{j=1}^m \Delta^{n_j}$ be a product of $m$ simplices. Let $\mathcal{F}(\Delta^{n_j}):=\{F_1^{\Delta_j}, \dots, F_{n_j+1}^{\Delta_j}\}$ be the facets of $\Delta^{n_j}$ for $j=1, \dots, m$. Therefore, the facet set of $P$ is
\begin{equation}\label{Eq_facet set of P}
\mathcal{F}(P):=\{F_{r_j}^j ~|~1\leq r_j \leq n_j+1, j=1, \dots, m\},
\end{equation}
where $F_{r_j}^j:=\Delta^{n_1} \times \dots \times \Delta^{n_{j-1}} \times F_{r_j}^{\Delta_j} \times \Delta^{n_{j+1}}\times \dots \times \Delta^{n_m}.$ Let
$\mathcal{N}_j:= \sum_{s=1}^j n_s$,
for $j=1, \ldots, m$. Thus $\mathcal{N}_1=n_1$ and $\mathcal{N}_m=n$. Let us define $\mathcal{N}_0:=0$. Note that $|\mathcal{F}(P)|=n_1+1+n_2+1+\dots+n_m+1=n+m$. Define a $\mathbb{Z}_2$-characteristic function $\lambda \colon \mathcal{F}(P) \to \mathbb{Z}_2^n,$ on $\mathcal{F}(P)$ by the following.
\[[\lambda(F^1_1), \dots, \lambda(F^1_{n_1+1}), \lambda(F^2_1), \dots, \lambda(F^2_{n_2+1})\dots,\lambda(F^m_1), \dots, \lambda(F^m_{n_m+1})]\] is given by 
 \[[e_1,\dots,e_{\mathcal{N}_1}, \boldsymbol{\alpha_1},e_{\mathcal{N}_1+1},\dots,e_{\mathcal{N}_2},\boldsymbol{\alpha_2}, \dots, e_{\mathcal{N}_{m-1}+1},\dots,e_{\mathcal{N}_m},\boldsymbol{\alpha_m}].\]

The function $\lambda$ determines the following $m \times m$ vector matrix $A$.

\begin{align}
  A:=&
\begin{pmatrix}
\boldsymbol{\alpha_1^1} & \boldsymbol{\alpha_2^1} & \dots & \boldsymbol{\alpha_m^1} \\
\boldsymbol{\alpha_1^2} & \boldsymbol{\alpha_2^2} & \dots & \boldsymbol{\alpha_m^2} \\
\vdots & \vdots & \dots & \vdots \\
\boldsymbol{\alpha_1^m} & \boldsymbol{\alpha_2^m} & \dots & \boldsymbol{\alpha_m^m} \\
\end{pmatrix}_{m \times m}=\begin{pmatrix} 
  \boldsymbol{\alpha_1} & \boldsymbol{\alpha_2} & \dots & \boldsymbol{\alpha_m}
\end{pmatrix}_{1 \times m}. \nonumber
\end{align}
where $\boldsymbol{\alpha_\ell^j}= ( \alpha_{\ell_ 1}^j, \alpha_{\ell_ 2}^j, \dots, \alpha_{\ell_ {n_j}}^j )^t \in \mathbb{Z}_2^{n_j}$.

Next, we recall the cohomology ring of the generalized real Bott manifolds from \cite{DJ}. We denote the facet $F_{r_j}^j$ by $F_{\mathcal{N}_{j-1}+r_j+j-1}$. Consider an indeterminate $x_i$ corresponding to the facet $F_i$ for each $i=1,2,\dots,n+m$.
The cohomology ring of the generalized real Bott manifold is given by 
\begin{equation}\label{Eq_cohom over small cover}
H^*(M^n(P,\lambda); \Z_2) \cong \frac{\Z_2[x_1, \dots, x_{n+m}]}{\Tilde{I} + \Tilde{J}},
\end{equation}  
 where the ideal $\Tilde{I}$ is given by 
\begin{equation}\label{Eq_the ideal I}
\Tilde{I}= \big< \{ x_{\mathcal{N}_{j-1}+1} x_{\mathcal{N}_{j-1}+2} \cdots x_{\mathcal{N}_j} x_{n+j} ~|~ j=1, \dots, m \}\big>,
\end{equation} 
 and the ideal $\Tilde{J}$ is generated by the coordinates of an $n$-tuple
\small{\begin{equation}\label{eq_lamda_j}
   \Lambda_{\Tilde{J}}=\begin{pmatrix}
   \lambda(F_1)^t & \lambda(F_2)^t & \dots & \lambda(F_{n+m})^t
   \end{pmatrix}_{(n \times (n+m))} \cdot \begin{pmatrix}
   x_1 & x_2 & \dots & x_{n+m}
   \end{pmatrix}^t_{(n+m) \times 1}.
\end{equation}}

 The $i$-th coordinate of $\Lambda_{\Tilde{J}}$ is given by
  $$x_i+\alpha_{1k_j}^j x_{n+1} + \alpha_{2k_j}^j x_{n+2} + \dots + \alpha^j_{mk_j} x_{n+m},$$
  where $i=\mathcal{N}_{j-1}+k_j$; $k_j=1, \dots, n_j$ and $j=1, \dots, m$.
  Thus any $x_i$ can be written as a $\Z_2$-linear combination of $x_{n+1}, \dots, x_{n+m}$ for $i=1, \dots, n$. For simplicity, we denote the indeterminate $x_{n+j}$ by $y_j$ for $j=1, \dots, m$. Thus,
\begin{equation} \label{x_i_relation_with_y_i}
  x_i= \sum_{\ell=1}^m \alpha^j_{\ell k_j} y_{\ell} ~\mbox{where}~ i=\mathcal{N}_{j-1}+k_j, k_j=1, \dots, n_j ~\mbox{and}~ j=1, \dots, m,
\end{equation} 
  in $H^*(M^n(P, \lambda); \Z_2)$. Therefore, using the generators of $\Tilde{I}$ we can express the cohomology ring of the generalized Bott tower $ H^*(M^n(P, \lambda); \Z_2)$ as a quotient of the polynomial ring $\Z[y_1,y_2,\dots,y_n]$. Moreover, the generators of the ideal $\Tilde{I}$ in \eqref{Eq_the ideal I} can be described in terms of $y_j$'s.

  On the other hand, using the arguments in \cite[Proposition $5.1$]{CMS10}, the matrix $A$ is conjugate to a unipotent lower triangular vector matrix of the following form:
\begin{align}\label{lower_triangular_matrix}
\Tilde{A}:=&
\begin{pmatrix}
\textbf{1} & \boldsymbol{0} & \dots & \boldsymbol{0} \\
\boldsymbol{\beta_1^2} & \textbf{1} & \dots & \boldsymbol{0} \\
\vdots & \vdots & \dots & \vdots \\
\boldsymbol{\beta_1^m} & \boldsymbol{\beta_2^m} & \dots & \textbf{1} \\
\end{pmatrix}_{m \times m}, 
\end{align}
where $\boldsymbol{\beta_j^k}= ( \beta_{j_1}^k, \beta_{j_2}^k, \dots, \beta_{j_{n_k}}^k )^t \in \mathbb{Z}_2^{n_k}$ and $\textbf{1}=(1, \dots, 1)^t \in \Z^{n_k}_2$ for $k=1, \dots, m$. The matrix $\Tilde{A}$ is called the Bott matrix. Thus, the ideal $\Tilde{J}$ becomes
  \[\Tilde{J}=\big<x_{\mathcal{N}_{j-1}+k_j}+ \beta_{1k_j}^jy_1+ \beta_{2k_j}^jy_2+ \cdots + \beta_{(j-1)k_j}^jy_{j-1}+y_j~|~ j=1,2,\dots,m\big>.\]
In the cohomology ring $ H^*(M^n(P, \lambda); \Z_2) $, we get $x_1= x_2= \cdots= x_{\mathcal{N}_1}= y_1$. 
For $j\geq 2$,
$$x_{\mathcal{N}_{j-1}+k_j}= \beta_{1k_j}^jy_1+ \beta_{2k_j}^jy_2+ \cdots + \beta_{(j-1)k_j}^jy_{j-1}+y_j \text{ for } 1\leq k_j\leq n_{j}.$$ 
Therefore, we have the following presentation of the cohomology ring with $\Z_2$ coefficients.

\begin{proposition}[\cite{brahma2023various}\label{compactform_cohomo_ring}]
The cohomology ring of the generalized Bott tower $M^n(P, \lambda)$ with $\Z_2$ coefficients is given by 
$$ H^*(M^n(P, \lambda); \Z_2) \cong \Z_2[y_1, y_2, \dots, y_m]/I,$$
where $$
I=\big< \Gamma_1,\Gamma_2,\dots,\Gamma_m\big>
\text{ and } {\Gamma}_j= \prod_{k_j=1}^{n_j} \big(y_j+ \sum_{\ell=1}^{j-1} \beta_{\ell _{k_j}}^j y_{\ell} \big) y_j \text{~ for } j=1,2,\dots, m.$$  
\end{proposition}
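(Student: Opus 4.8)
The plan is to derive this compact presentation from the Davis--Januszkiewicz presentation of \Cref{prop_cohom_ring} by using the linear relations in the ideal $\Tilde{J}$ to eliminate $x_1,\dots,x_n$ in favour of the classes $y_j:=x_{n+j}$ for $j=1,\dots,m$. First I would record the shape of $\Tilde{J}$ after the row reduction carried out just above the statement: its $n$ generators (the coordinates of the tuple $\Lambda_{\Tilde{J}}$ in \eqref{eq_lamda_j}) become $x_i+y_1$ for $1\le i\le\mathcal{N}_1$ and $x_{\mathcal{N}_{j-1}+k_j}+\sum_{\ell=1}^{j-1}\beta_{\ell k_j}^j y_\ell+y_j$ for $2\le j\le m$ and $1\le k_j\le n_j$. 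In other words $\Tilde{J}=\langle\, x_i+L_i \,:\, 1\le i\le n\,\rangle$, where each $L_i$ is the $\Z_2$-linear form in $y_1,\dots,y_m$ given by $L_i=y_1$ for $i\le\mathcal{N}_1$ and $L_{\mathcal{N}_{j-1}+k_j}=y_j+\sum_{\ell=1}^{j-1}\beta_{\ell k_j}^j y_\ell$ otherwise.

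Granting this normal form, the remaining argument is a two-step quotient. Since $\Tilde{J}$ is generated by the $n$ elements $x_i+L_i$ and each $L_i$ lies in $\Z_2[y_1,\dots,y_m]=\Z_2[x_{n+1},\dots,x_{n+m}]$, the assignment $y_j\mapsto x_{n+j}$ defines an isomorphism $\Z_2[y_1,\dots,y_m]\xrightarrow{\cong}\Z_2[x_1,\dots,x_{n+m}]/\Tilde{J}$ whose inverse sends $x_i\mapsto L_i$ for $i\le n$. Under this isomorphism the image of the ideal $\Tilde{I}$ of \eqref{Eq_the ideal I} is generated by the images of its $m$ monomial generators, and the image of the $j$-th generator $x_{\mathcal{N}_{j-1}+1}\cdots x_{\mathcal{N}_j}\,x_{n+j}$ is
\[
L_{\mathcal{N}_{j-1}+1}\cdots L_{\mathcal{N}_j}\cdot y_j=\prod_{k_j=1}^{n_j}\bigl(y_j+\sum_{\ell=1}^{j-1}\beta_{\ell k_j}^j y_\ell\bigr)y_j=\Gamma_j.
\]
Hence by the third isomorphism theorem
\[
H^*(M^n(P,\lambda);\Z_2)\cong\frac{\Z_2[x_1,\dots,x_{n+m}]}{\Tilde{I}+\Tilde{J}}\cong\frac{\Z_2[y_1,\dots,y_m]}{\langle\Gamma_1,\dots,\Gamma_m\rangle}=\frac{\Z_2[y_1,\dots,y_m]}{I},
\]
which is the claim.

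The one step I expect to require care is the first one --- justifying that we may assume the vector matrix to be in the Bott (unipotent lower-triangular) form $\Tilde{A}$ of \eqref{lower_triangular_matrix}, equivalently that $\Tilde{J}$ admits the reduced generating set above. I would argue this exactly as the text indicates: by \cite[Proposition 5.1]{CMS10} conjugating $A$ to $\Tilde{A}$ amounts to replacing $\lambda$ by an equivalent $\Z_2$-characteristic function on the same $P$, which yields an equivariantly homeomorphic real torus manifold and hence the same $\Z_2$-cohomology ring; alternatively, and more elementarily, performing invertible $\Z_2$-row operations on the coefficient matrix in \eqref{eq_lamda_j} replaces the generating set of $\Tilde{J}$ by another generating set of the same ideal while leaving the monomial generators of $\Tilde{I}$ (which involve no $\lambda$-data) unchanged, and the row reduction displayed before the statement brings the matrix to the required triangular shape. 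Once this is settled, the substitution in the previous paragraph is routine and completes the proof.
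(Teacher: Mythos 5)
Your derivation follows the same route the paper itself sketches in the paragraphs preceding the proposition (which is imported from \cite{brahma2023various}, so that sketch is the extent of the paper's argument): eliminate $x_1,\dots,x_n$ via the linear relations in $\Tilde{J}$ after reducing the vector matrix to Bott form, substitute into the monomial generators of $\Tilde{I}$, and conclude with the third isomorphism theorem; you have made the ring-theoretic bookkeeping explicit and it is correct. One minor inaccuracy worth noting: your ``more elementary'' fallback for justifying the Bott form does not actually work, because after the relabeling that puts the $\alpha_j$-columns last the coefficient matrix has the shape $[\,I_n \mid A'\,]$, and left multiplication by $S \in GL_n(\Z_2)$ turns it into $[\,S \mid SA'\,]$, which destroys the form $x_i + L_i$ of the $\Tilde{J}$-generators unless $S$ is a (block) permutation, and permutations alone cannot triangulate a general $A'$. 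The CMS10 reduction genuinely replaces $(P,\lambda)$ by an equivalent characteristic pair (typically after reordering the simplex factors), so your primary justification via the resulting equivariant homeomorphism is the correct one, and the proof stands.
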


For the $2$-stage Bott tower over $\Delta^{n_1}\times \Delta^{n_2}$, in \cite[Theorem 4.6]{brahma2023various} it was shown that $y_1^{n_1}y_2^{n_2}\neq 0$. We now generalize this to any $m$-stage Bott tower for arbitrary $m$. This will be used in obtaining a sharp cohomological lower bound on the topological complexity of generalized Bott towers.

\begin{lemma} \label{yj_nj_neq_0}
   Let  $M^n(P, \lambda)$ be a small cover over the product of simplices $\prod_{j=1}^m \Delta^{n_j}$ with the characteristic function $\lambda$. Then, for $j \in \lbrace 1,\dots,m \rbrace$, $y_j^{n_j} \neq 0$ in the cohomology ring $H^*(M^n(P, \lambda); \Z_2)$. Moreover,  $y_1^{n_1}y_2^{n_2} \cdots y_m^{n_m} \neq 0$.
\end{lemma}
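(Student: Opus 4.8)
The plan is to work with the compact presentation $H^*(M^n(P,\lambda);\Z_2)\cong\Z_2[y_1,\dots,y_m]/I$ from \Cref{compactform_cohomo_ring}, where $I=\langle\Gamma_1,\dots,\Gamma_m\rangle$ and $\Gamma_j=\big(\prod_{k_j=1}^{n_j}(y_j+\sum_{\ell=1}^{j-1}\beta_{\ell_{k_j}}^j y_\ell)\big)\,y_j$. The key structural observation is that each $\Gamma_j$ is a polynomial of degree $n_j+1$ in which $y_j^{n_j+1}$ appears with coefficient $1$ (expand the product: the top-degree-in-$y_j$ term of $\prod_{k_j}(y_j+\cdots)$ is $y_j^{n_j}$, and multiplying by the trailing $y_j$ gives $y_j^{n_j+1}$), while every other monomial of $\Gamma_j$ involves $y_1,\dots,y_{j-1}$ to positive total degree. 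Hence the relations are ``triangular'': $\Gamma_j$ lets us rewrite $y_j^{n_j+1}$ in terms of lower-indexed variables times lower powers of $y_j$.

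First I would make this precise by choosing a monomial order (e.g.\ degree-lexicographic with $y_1<y_2<\dots<y_m$) and checking that the leading term of $\Gamma_j$ is $y_j^{n_j+1}$. Then $\{\Gamma_1,\dots,\Gamma_m\}$ is already a Gr\"obner basis: the leading terms $y_1^{n_1+1},\dots,y_m^{n_m+1}$ are pairwise coprime monomials, so all $S$-polynomials reduce to zero by Buchberger's first criterion. Consequently the standard monomials — a $\Z_2$-basis of the quotient — are exactly the monomials $y_1^{a_1}\cdots y_m^{a_m}$ with $0\le a_j\le n_j$ for all $j$. In particular the top-degree standard monomial $y_1^{n_1}y_2^{n_2}\cdots y_m^{n_m}$ is a basis element, hence nonzero; and each individual $y_j^{n_j}$ (which has $a_j=n_j\le n_j$ and all other exponents $0$) is also a standard monomial, hence nonzero. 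This establishes both assertions simultaneously, and as a bonus identifies the full Poincar\'e polynomial of $M^n(P,\lambda)$.

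The main obstacle is the Gr\"obner-basis verification — specifically, confirming that the leading term of $\Gamma_j$ really is the pure power $y_j^{n_j+1}$ under the chosen order, and that no lower-index variable can create a larger leading monomial in some alternative ordering; but with degree-lex this is immediate since $\Gamma_j$ is homogeneous of degree $n_j+1$ and $y_j^{n_j+1}$ is the lex-largest degree-$(n_j+1)$ monomial occurring in it. If one prefers to avoid Gr\"obner machinery, an equivalent and perhaps cleaner route is a dimension count: the quotient is spanned by $\{y_1^{a_1}\cdots y_m^{a_m}: 0\le a_j\le n_j\}$ (using the relations to reduce any exponent exceeding $n_j$), so $\dim_{\Z_2}H^*(M^n(P,\lambda);\Z_2)\le\prod_{j=1}^m(n_j+1)$; on the other hand $M^n(P,\lambda)$ is a closed $n$-manifold built from $P=\prod\Delta^{n_j}$, whose cell structure (or whose Betti-sum, via the fact that real torus manifolds over $P$ have $\sum_i\dim H^i = $ number of vertices of $P = \prod(n_j+1)$) forces equality. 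Equality of dimension with the spanning-set size means the spanning set is a basis, so no nontrivial $\Z_2$-combination of those monomials vanishes — in particular $y_1^{n_1}\cdots y_m^{n_m}\neq 0$ and each $y_j^{n_j}\neq0$. I would present the Gr\"obner argument as the main line since it is self-contained given only \Cref{compactform_cohomo_ring}.
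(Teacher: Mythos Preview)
Your proof is correct and takes a genuinely different route from the paper's. The paper argues the two claims separately: for $y_j^{n_j}\neq 0$ it observes (somewhat informally) that the smallest pure power of $y_j$ that can occur in any element of $I$ is $y_j^{n_j+1}$; for the product $y_1^{n_1}\cdots y_m^{n_m}\neq 0$ it invokes Poincar\'e duality to get $x_1\cdots x_{\mathcal N_m}\neq 0$, then uses the triangular relations and the previously established vanishings $y_1^{n_1+1}=0$, $y_1^{n_1}y_2^{n_2+1}=0$, \dots\ to rewrite this product step by step as $y_1^{n_1}\cdots y_m^{n_m}$. Your Gr\"obner-basis argument replaces both steps by a single algebraic observation: with the lex order in which $y_m$ is largest, $\mathrm{LT}(\Gamma_j)=y_j^{n_j+1}$, these leading terms are pairwise coprime, so Buchberger's first criterion gives a Gr\"obner basis and the standard monomials $\{y_1^{a_1}\cdots y_m^{a_m}:0\le a_j\le n_j\}$ form a $\Z_2$-basis of the quotient. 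This is purely algebraic (no appeal to Poincar\'e duality), yields more information (the full additive basis and hence the Poincar\'e polynomial $\prod_j(1+t+\cdots+t^{n_j})$), and makes the first claim in the paper's proof completely rigorous. The paper's approach, on the other hand, has the virtue of avoiding any commutative-algebra machinery, relying instead on a topological fact (Poincar\'e duality) already available for the manifold at hand.
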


\begin{proof}
Let $M^n(P,\lambda)$ be the small cover over the product of simplices $P= \prod_{j=1}^m \Delta^{n_j}$ and $\Tilde{A}$ be the Bott matrix as in \eqref{lower_triangular_matrix} corresponding to the characteristic function $\lambda$ on $P$. We recall the cohomology ring of a small cover over a product of simplices from  \Cref{compactform_cohomo_ring}.

Now we look at the generators $\Gamma_j$'s of the ideal $I$ in the cohomology ring in 
\Cref{compactform_cohomo_ring}. 
Note that,
$$\Gamma_1= x_1 x_2 \cdots x_{n_1} y_1= y_1^{n_1+1}, ~\mbox{and}$$
 \begin{align*}
\Gamma_j 
&= (y_j+ \beta_{11}^jy_1+ \beta_{21}^jy_2+ \cdots + \beta_{(j-1)1}^jy_{j-1})(y_j+ \beta_{12}^jy_1+ \beta_{22}^jy_2+ \\
& \cdots + \beta_{(j-1)2}^jy_{j-1}) 
 \cdots (y_j+ \beta_{1n_j}^jy_1+ \beta_{2n_j}^jy_2+ \cdots + \beta_{(j-1)n_j}^jy_{j-1})y_j,
\end{align*}
for $j=2,\dots,m$. Now the least power of $y_j$ in $\Gamma_j$ is $n_j+1$. Our claim is that $y_j^{n_j} \neq 0$. 
If not, let $y_j^{n_j}= 0$. Then $y_j^{n_j} \in I$. But the least power of $y_j$, which appears as a term in a polynomial in the ideal $I$ is $y_j^{n_j+1}$. This is a contradiction. Hence $y_j^{n_j} \notin I$, i.e., $y_j^{n_j} \neq 0$ in $H^*(M^n(P, \lambda); \mathbb{Z}_2)$ for $j=1,2,\dots, m$.

Now, using the Poincare duality, we have ${x_1  \cdots x_{\mathcal{N}_1}x_{\mathcal{N}_1+1}  \cdots x_{\mathcal{N}_2} \neq 0}$. Now $x_{\mathcal{N}_1+1}=\ldots = x_{\mathcal{N}_2}$ and all of them are either $y_2$ or, $y_1+y_2$. If all of them are equal to $y_2$, then  $y_1^{n_1}y_2^{n_2}\neq 0$. If all of them are equal to $y_1+y_2$ then $$x_1  \cdots x_{\mathcal{N}_1}x_{\mathcal{N}_1+1}  \cdots x_{\mathcal{N}_2} =y_1^{n_1}(y_1+y_2)^{n_2}= y_1^{n_1}(y_2^{n_2}+y_1\cdot f(y_1,y_2))\neq 0.$$ We have $y_1^{n_1+1}=0$. 
 Therefore, $y_1^{n_1}y_2^{n_2}\neq 0$ holds for both the cases. 

A similar computation as above implies that $y_1^{n_1}y_2^{n_2+1}=0$, since ${x_1  \cdots x_{\mathcal{N}_1}\Gamma_2= 0}$. 
Again using the Poincare duality ${x_1  \cdots x_{\mathcal{N}_1}x_{\mathcal{N}_1+1}  \cdots x_{\mathcal{N}_2}\cdots x_{\mathcal{N}_1}x_{\mathcal{N}_2+1}  \cdots x_{\mathcal{N}_3} \neq 0}$ and we have two other conditions $y_1^{n_1+1}=0$ and $y_1^{n_1}y_2^{n_2+1}=0$. Using this $$y_1^{n_1}(y_1+y_2)^{n_2}(y_1+y_2+y_3)^{n_3}=y_1^{n_1}y_2^{n_2}y_3^{n_3}.$$ Then we have $y_1^{n_1}y_2^{n_2}y_3^{n_3}\neq 0$ and $y_1^{n_1}y_2^{n_2}y_3^{n_3+1}=0$. 

Now inductively, we have  $y_1^{n_1+1}=0$, $y_1^{n_1}y_2^{n_2+1}=0,\dots,y_1^{n_1}y_2^{n_2}\cdots y_{m-1}^{n_{m-1}+1}=0$. Moreover, using the Poincare duality we get, \[{x_1  \cdots x_{\mathcal{N}_1}x_{\mathcal{N}_1+1}  \cdots x_{\mathcal{N}_2}x_{\mathcal{N}_2+1} \cdots x_{\mathcal{N}_{m-1}+1} \cdots x_{\mathcal{N}_m} \neq 0}.\] 
This implies  $y_1^{n_1}y_2^{n_2} \cdots y_m^{n_m} \neq 0$.
\end{proof}

We now use \Cref{yj_nj_neq_0} to obtain the sharp lower bound on the topological complexity of generalized real Bott manifolds.

\begin{theorem}\label{highertc_small_cover_special}
Let $M^n(P, \lambda)$ be a small cover over $P= \prod_{j=1}^m \Delta^{n_j} $. If $n_j \leq 2^{r_j}-1 < 2n_j$, then \[\TC(M^n(P, \lambda)) \geq (2^{r_1}+ \cdots + 2^{r_m})- (m-1).\]
\end{theorem}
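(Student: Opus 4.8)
The plan is to combine the multiplicative non-vanishing result of \Cref{yj_nj_neq_0} with the standard zero-divisor-cup-length lower bound for $\TC$ recorded in \eqref{eq:lscat_tc_lbd}, exactly in the spirit of how $\TC(\R P^n)$ lower bounds are extracted from immersion-type obstructions. Recall that for $\R P^{k}$ with $k \le 2^r - 1 < 2k$ one has $\zl_{\Z_2}(\R P^k) \ge 2^r - 1$, because in $H^*(\R P^k \times \R P^k; \Z_2) = \Z_2[a,b]/(a^{k+1},b^{k+1})$ the zero-divisor $z = a + b$ satisfies $z^{2^r - 1} = \sum_{i} \binom{2^r-1}{i} a^i b^{2^r-1-i} \ne 0$ (all binomial coefficients $\binom{2^r-1}{i}$ are odd mod $2$, and since $k \le 2^r - 1 \le 2k$ at least one monomial $a^i b^{2^r-1-i}$ with $i, 2^r-1-i \le k$ survives). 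I would import this computation one factor at a time into $H^*(M \times M; \Z_2)$.

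\textbf{Step 1.} For each $j \in \{1,\dots,m\}$, set $\bar y_j := y_j \otimes 1 + 1 \otimes y_j \in H^1(M \times M; \Z_2)$; this is a zero-divisor since $\cup(\bar y_j) = y_j + y_j = 0$. Because $n_j \le 2^{r_j} - 1 < 2 n_j$, expanding $\bar y_j^{\,2^{r_j}-1}$ by the binomial theorem mod $2$ and using $y_j^{n_j+1} = 0$ (which holds in $H^*(M;\Z_2)$ as established inside the proof of \Cref{yj_nj_neq_0}), the surviving monomials are exactly $\sum_{i=n_j}^{2^{r_j}-1-n_j}\binom{2^{r_j}-1}{i}\, y_j^{i}\otimes y_j^{2^{r_j}-1-i}$, of which the term $y_j^{n_j} \otimes y_j^{2^{r_j}-1-n_j}$ appears with odd coefficient. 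So $\bar y_j^{\,2^{r_j}-1}$ has a nonzero "leading" component of bidegree $(n_j, 2^{r_j}-1-n_j)$ in the $y_j$-variables.

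\textbf{Step 2.} Form the product $z := \prod_{j=1}^{m} \bar y_j^{\,2^{r_j}-1}$, a product of $\sum_j (2^{r_j}-1) = (2^{r_1}+\cdots+2^{r_m}) - m$ zero-divisors. I would show $z \ne 0$ by isolating, in the multi-graded expansion, the single monomial $\big(y_1^{n_1}\cdots y_m^{n_m}\big) \otimes \big(y_1^{2^{r_1}-1-n_1}\cdots y_m^{2^{r_m}-1-n_m}\big)$. On the first tensor factor this is $y_1^{n_1}\cdots y_m^{n_m}$, which is nonzero by \Cref{yj_nj_neq_0}. On the second factor each exponent satisfies $0 \le 2^{r_j}-1-n_j \le n_j$, so $y_1^{2^{r_1}-1-n_1}\cdots y_m^{2^{r_m}-1-n_m}$ is a divisor (up to the relations) of $y_1^{n_1}\cdots y_m^{n_m}$ and hence is also nonzero by the same lemma. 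The key point is that no other term in the expansion of $z$ has this exact bidegree in each of the $m$ blocks of variables, since for fixed $j$ the bidegree-$(n_j, 2^{r_j}-1-n_j)$ part of $\bar y_j^{\,2^{r_j}-1}$ is the unique monomial $y_j^{n_j}\otimes y_j^{2^{r_j}-1-n_j}$; therefore the $(n_1+\cdots+n_m,\ \cdot)$-graded component of $z$ equals (an odd multiple of) this monomial, which is nonzero. Thus $\zl_{\Z_2}(M) \ge (2^{r_1}+\cdots+2^{r_m}) - m$, and \eqref{eq:lscat_tc_lbd} gives $\TC(M) \ge (2^{r_1}+\cdots+2^{r_m}) - (m-1)$.

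\textbf{Main obstacle.} The delicate part is Step 2: verifying that the cross terms do not cancel the chosen leading monomial. Working block by block in the $\Z^m$-multigrading of each tensor factor reduces this to the single-variable case, where it is just the $\R P^k$ computation, but one must be careful that the cohomology ring relations $\Gamma_j$ (which mix the $y_\ell$'s with $\ell < j$) do not produce extra contributions in the minimal multidegree — here the observation that $\Gamma_j$ has least total $y_j$-power $n_j+1$ and involves only $y_1,\dots,y_j$ is what keeps the argument clean. I would phrase Step 2 using the filtration of $H^*(M;\Z_2)$ by the lexicographic leading monomial (equivalently, pass to the associated graded ring $\Z_2[y_1,\dots,y_m]/(y_1^{n_1+1},\dots,y_m^{n_m+1})$, into which $y_1^{n_1}\cdots y_m^{n_m}$ injects nonzero by \Cref{yj_nj_neq_0}), which makes the non-cancellation transparent.
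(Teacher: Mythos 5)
Your overall route matches the paper's proof exactly: take $\bar y_j = 1\otimes y_j + y_j\otimes 1$ (the paper writes $\mathfrak{a}_j = 1\otimes y_j - y_j\otimes 1$, which is the same mod $2$), raise to the power $d_j = 2^{r_j}-1$, use that every $\binom{2^{r_j}-1}{i}$ is odd, and multiply over $j$ to get a nontrivial product of $\sum_j d_j$ zero-divisors. However, the intermediate claim in Step~1 that $y_j^{n_j+1}=0$ in $H^*(M;\Z_2)$ is false for $j\geq 2$. The relation from \Cref{compactform_cohomo_ring} is $\Gamma_j = y_j\prod_{k_j}\bigl(y_j+\sum_{\ell<j}\beta^j_{\ell k_j}y_\ell\bigr)=0$, which expresses $y_j^{n_j+1}$ as a sum of monomials of lower $y_j$-degree involving $y_1,\dots,y_{j-1}$; what \Cref{yj_nj_neq_0} actually proves is $y_1^{n_1+1}=0$ and $y_1^{n_1}\cdots y_{j-1}^{n_{j-1}}y_j^{n_j+1}=0$, not $y_j^{n_j+1}=0$ for general $j$. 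Consequently, the claimed truncation of the binomial expansion of $\bar y_j^{d_j}$ is not literally valid, and the ``bidegree in each of the $m$ blocks of variables'' used in Step~2 is not well-defined on $H^*(M;\Z_2)$ either, precisely because the relations $\Gamma_j$ mix the $y_\ell$'s.

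Your ``Main obstacle'' paragraph does contain the correct fix, and you should lead with it rather than with the false $y_j^{n_j+1}=0$: choose a monomial order in which the initial term of $\Gamma_j$ is $y_j^{n_j+1}$; since these $m$ leading monomials are pairwise coprime, $\{\Gamma_1,\dots,\Gamma_m\}$ is a Gr\"obner basis and the associated graded ring is the truncated polynomial algebra $\Z_2[y_1,\dots,y_m]/(y_1^{n_1+1},\dots,y_m^{n_m+1})$. In $\mathrm{gr}\otimes\mathrm{gr}$ the leading term of $z=\prod_j\bar y_j^{d_j}$ is exactly the truncated expansion you wrote, which is visibly nonzero, and a nonzero leading term forces $z\neq 0$. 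It is worth noting that this is a sharper treatment than the paper gives: the paper argues each $\mathfrak{a}_j^{d_j}\neq 0$ and then asserts the product $\mathfrak{a}_1^{d_1}\cdots\mathfrak{a}_m^{d_m}$ is nonzero without justifying why no cancellation occurs, which is precisely the point your associated-graded argument addresses.
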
 

\begin{proof}
Let  $d_j= 2^{r_j}-1$ such that $n_j \leq d_j < 2n_j$  for $j= 1,\dots, m$.
Let $\mathfrak{a}_j: =1 \otimes y_j- y_j \otimes 1$.
Now,
\begin{align*}
\mathfrak{a}_j^{d_j} &=  (1 \otimes y_j- y_j \otimes 1) ^{d_j}\nonumber \\
&= \sum_{s=0}^{d_j} (-1)^{d_j-s} \binom{d_j}{s} (1 \otimes y_j)^s (y_j \otimes 1)^{d_j-s} \nonumber\\
&=  \sum_{s=0}^{d_j} (-1)^{d_j-s} \binom{d_j}{s} (y_j^{d_j-s} \otimes y_j^s).\nonumber
\end{align*}

Note that the binomial coefficients $\binom {2^{r_j}-1} {i}$ are odd for all $0 \leq i \leq d_j$. The binomial expansion of  $\mathfrak{a}_j^{d_j}$ contains the term $(y_j^{d_j-n_j} \otimes y_j^{n_j})$ which is non-zero and there is no other same term in the expression of $\mathfrak{a}_j^{d_j}$. So $\mathfrak{a}_j^{d_j}$ is nonzero. Therefore $\mathfrak{a}_1^{d_1} \cdots \mathfrak{a}_m^{d_m}$ is non-zero.
Hence zero-divisors-cup-length of $H^*(M^n(P, \lambda); \Z_2)$ is greater than or equal to $d_1+ \cdots+ d_m $. Therefore, we have $\TC(M^n(P, \lambda)) \geq d_1+ \cdots+ d_m+1$, i.e., \[\TC(M^n(P, \lambda)) \geq 2^{r_1}+ \cdots + 2^{r_m}- (m-1).\]
This proves the claim.
\end{proof}
\Cref{highertc_small_cover_special} gives a better lower bound of the LS-category of a generalized real Bott manifold if each $n_j$ is a power of $2$.  

\begin{corollary}\label{cor: tc lb}
Let $M^n(P, \lambda)$ be a small cover over $P= \prod_{j=1}^m \Delta^{n_j} $. If $n_j=2^{r_j-1}$ for $j=1,2,\dots,m$ then  $$\TC(M^n(P, \lambda))\geq 2n-m+1.$$
\end{corollary}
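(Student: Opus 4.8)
The plan is to obtain this as an immediate consequence of \Cref{highertc_small_cover_special}, so the proof is essentially a verification that the hypotheses of that theorem hold for the stated choice of exponents together with a one-line arithmetic simplification.

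First I would check the inequality chain required by \Cref{highertc_small_cover_special}. Under the hypothesis $n_j = 2^{r_j-1}$ we have
\[
2^{r_j}-1 = 2\cdot 2^{r_j-1}-1 = 2n_j-1,
\]
so the required condition $n_j \le 2^{r_j}-1 < 2n_j$ reads $n_j \le 2n_j-1 < 2n_j$. The right inequality is trivial, and the left one is equivalent to $n_j \ge 1$, which holds since each $\Delta^{n_j}$ is a genuine simplex (the product of simplices is a bona fide $n$-dimensional polytope with $n = \sum_{j=1}^m n_j > 0$). Hence \Cref{highertc_small_cover_special} applies with exactly these $r_j$.

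Next I would feed this into the conclusion of \Cref{highertc_small_cover_special}, which gives
\[
\TC(M^n(P,\lambda)) \ge (2^{r_1}+\cdots+2^{r_m}) - (m-1).
\]
Substituting $2^{r_j} = 2\cdot 2^{r_j-1} = 2n_j$ and using $n = \sum_{j=1}^m n_j$, the right-hand side becomes $2\sum_{j=1}^m n_j - (m-1) = 2n - m + 1$, which is precisely the claimed bound. Optionally I would also record the complementary dimensional upper bound $\TC(M^n(P,\lambda)) \le 2n+1$ from \cite[Theorem 4]{Far}, so that the corollary pins the value into the window $2n-m+1 \le \TC(M^n(P,\lambda)) \le 2n+1$, showing the estimate is nearly sharp when $m$ is small.

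There is no real obstacle here: all the substantive content, namely the non-vanishing $y_1^{n_1}y_2^{n_2}\cdots y_m^{n_m}\neq 0$ from \Cref{yj_nj_neq_0} and the zero-divisors-cup-length estimate via the binomial expansion of $\mathfrak{a}_j^{d_j}$ with $d_j = 2^{r_j}-1$, has already been carried out in the proof of \Cref{highertc_small_cover_special}; the only thing to get right is the elementary identity $2^{r_j} = 2n_j$ and the bookkeeping of the summation.
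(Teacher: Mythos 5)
Your proof is correct and follows the paper's own argument exactly: verify that $n_j = 2^{r_j-1}$ satisfies $n_j \le 2^{r_j}-1 < 2n_j$, apply \Cref{highertc_small_cover_special}, then substitute $2^{r_j}=2n_j$ and sum. The only cosmetic difference is that you spell out the inequality check and the arithmetic slightly more explicitly than the paper does.
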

\begin{proof}
If $n_j=2^{r_j-1}$ then $r_j$ satisfies $n_j\leq 2^{r_j}-1<2n_j$. Using  \Cref{highertc_small_cover_special}, we get  \[ \TC(M^n(P, \lambda))\geq 2^{r_1}+\dots+2^{r_m}-m+1.\] 
Since $n=n_1+\dots+n_m$, we have $$\TC(M^n(P, \lambda))\geq 2n-m+1.$$
\end{proof}
We obtain a sharp upper bound on $\TC(M^n(P, \lambda))$, which improves the dimensional upper bound.

\begin{theorem}\label{thm_some_odd_fact}
Let $M^n(P, \lambda)$ be a small cover over $P= \prod_{j=1}^m \Delta^{n_j} $. If $n_j > 1$ is odd for each $j \in \{i_1, \dots, i_k\} \subseteq \{1, \ldots, m\}$ then $$ \TC(M^n(P, \lambda))\leq 2n-k+1.$$
\end{theorem}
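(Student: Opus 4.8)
The plan is to combine the dimensional upper bound for $\TC$ of a CW-complex with a cohomological refinement that exploits the odd-dimensional simplex factors. Recall the standard improved upper bound: if $X$ is an $r$-dimensional CW-complex which is $s$-connected, then $\TC(X) \le \frac{2r + 1}{s + 1} + 1$ (Farber, \cite{Far}). That is too crude here since real torus manifolds are only connected, not highly connected, so I would instead use the sharper bound involving $\ct$ together with a zero-divisor argument, or — more directly — the following refinement available for manifolds: a closed $n$-manifold $M$ has $\TC(M) \le 2n$, and moreover $\TC(M) \le 2n + 1 - \mathrm{conn}$-type improvements hold when one can realize $M \times M$ with fewer Schwarz-genus pieces. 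The cleanest route is to use the fibrewise/join estimate: $\TC(M) \le 2n + 1$ always, and each odd-dimensional projective-type factor contributes a drop of $1$ because $\R P^{\text{odd}}$ is parallelizable-like in the sense that its tangent bundle contributes a trivial summand, which is exactly the mechanism behind the classical $\TC(\R P^{2^k-1})$-type savings.

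Concretely, here is the step sequence I would carry out. First, I would use \Cref{small_cover_realbott} to write $M^n(P,\lambda) = B_m$ as a generalized real Bott manifold, sitting at the top of a tower \eqref{bott_tower} with fibres $\R P^{n_j}$. Second, I would invoke the product/fibration inequality for $\TC$: for a fibre bundle $F \hookrightarrow E \to B$ one has $\TC(E) \le \TC(B) + 2\,\ct(F) - 1$ (or the sharper version $\TC(E)\le \TC(B)\cdot\TC(F)$ in the trivial case; in the twisted case one uses the fibrewise bound of Farber–Grant). Iterating down the tower, $\TC(B_m) \le \sum_{j=1}^m (\text{contribution of } \R P^{n_j})$. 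Third — the crux — I would show the contribution of each factor $\R P^{n_j}$ to this sum is $2n_j$ when $n_j$ is even but only $2n_j - 1$ when $n_j$ is odd. The even case is just $\ct(\R P^{n_j}) = n_j + 1$, giving $2\ct - 1 = 2n_j + 1$; wait — I need to be careful with the bookkeeping so that the base point of the tower and the off-by-one constants assemble to the stated $2n - k + 1$. The right bookkeeping: the dimensional bound gives $2n + 1$ outright, and I must produce $k$ separate improvements, one per odd factor. So I would instead argue directly on $M^n(P,\lambda)$: split off each odd $\R P^{n_j}$ using the connected-sum/bundle structure and apply the known fact that $\TC$ of the total space drops by one relative to the naive dimensional count precisely because an odd-dimensional real projective space admits a nowhere-zero vector field, equivalently has an $S^0$-free action giving an extra section over the diagonal.

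The key step where I expect the real difficulty is making the ``each odd factor saves one'' claim rigorous in the twisted (non-product) Bott tower setting: the line bundles $E_j^{(i)}$ may be nontrivial, so $B_m$ is not literally a product $\prod \R P^{n_j}$, and the savings must be transported through the twisting. I would handle this by working at the level of sectional category with the fibrewise join construction of Farber–Grant: over the diagonal region of $B_m \times B_m$, a section of the path fibration exists by geodesic shortest paths, and the obstruction to extending over one more open set is governed by a vector bundle whose rank drops by $1$ for each odd projective fibre because the relevant bundle $(\text{tangent along fibre}) \ominus \underline{\R}$ has a trivial summand when the fibre dimension is odd. Assembling the $k$ such reductions and adding the universal bound $\TC \le 2n+1$ (from \cite[Theorem 4]{Far} as already used in \Cref{thm: tcmpchi}) then yields $\TC(M^n(P,\lambda)) \le 2n - k + 1$, as claimed. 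Finally I would note consistency with \Cref{highertc_small_cover_special}: when every $n_j = 2^{r_j-1}$ is even, $k = 0$ and we recover $\TC \le 2n+1$, while the lower bound $2n - m + 1$ from \Cref{cor: tc lb} shows these estimates are sharp in a range of cases.
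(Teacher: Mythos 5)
There is a genuine gap, and your route is not the paper's. You correctly identify that each odd $n_j$ should yield a savings of one, but your proposed mechanisms never close the loop. The iterated fibration inequality $\TC(E)\le \TC(B)+2\ct(F)-1$ applied down the Bott tower would give each fibre $\R P^{n_j}$ a contribution of $2\ct(\R P^{n_j})-1 = 2n_j+1$, so summing over $j$ \emph{overshoots} $2n+1$ rather than undershooting it; this is why you end up hand-waving toward an obstruction-theoretic ``rank drops by one'' argument via fibrewise joins. You acknowledge yourself that you don't know how to push that through the twisting, and indeed it is not clear how a tangent-along-the-fibre argument on a twisted $\R P^{n_j}$-bundle produces exactly a unit reduction in sectional category; nothing in your sketch pins down the needed section of the Farber--Grant fibration.

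The paper's argument is structurally different and much more economical: it never touches the Bott tower or fibrewise obstruction theory. It uses the real moment-angle manifold $\mathcal{Z}_P = \prod_{j=1}^m S^{n_j}$ over $P=\prod_j\Delta^{n_j}$, the fact that $M^n(P,\lambda)=\mathcal{Z}_P/(\Z_2)^m$ for a free $(\Z_2)^m$-action, and the observation that each odd $n_j>1$ gives $S^{n_j}\subset\C^{(n_j+1)/2}$ a free Hopf $S^1$-action by complex scalar multiplication. These circle actions commute with the $(\Z_2)^m$-action (both are coordinatewise multiplications by complex numbers of modulus one), so they descend to a free $(S^1)^k$-action on $M$. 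The bound $\TC(M)\le 2n-k+1$ then drops out of Grant's result \cite[Corollary 5.3]{grantfibsymm}, which bounds $\TC$ of a closed manifold admitting a free action of a $k$-dimensional compact Lie group by $2\dim - k+1$. The key idea you are missing is precisely this realization of $M$ as a quotient of a product of spheres carrying commuting free torus and $\Z_2^m$-actions; your ``$S^0$-free action'' and nowhere-zero vector field remarks point in a related direction but do not get you a free $S^1$-action on $M$ itself, which is what the theorem you need actually requires.
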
 

\begin{proof}
The real moment angle manifold corresponding to the polytope $P=\prod_{j=1}^m\Delta^{n_j}$ is $\prod_{j=1}^mS^{n_j}$. Let $m_j = \frac{n_j+1}{2}$. Then, for each $j \in \{i_1, \dots, i_k\} \subset \{1, \ldots, m\}$, there is a free $S^1$-action on 
$$S^{n_j}=\{(z_1, \ldots, z_{m_j}) \in \C^{m_j} ~|~ \sum_{j=1}^{m_j} |z_j|^2 =1\}$$
defined by $\eta (z_1, \ldots, z_{m_j}) \mapsto (\eta z_1, \ldots, \eta z_{m_j})$. Thus, there exists a free $(S^1)^k$-action on $\prod_{j=1}^mS^{n_j}$. Now $M^n(P,\lambda)$ can be realized as the orbit space of the moment angle manifold
$\prod_{j=1}^mS^{n_j}$ by a free $(\Z_2)^m$-action, see \cite[Remark 2.3]{DU19}. Since both of these actions are a multiplication by a complex number, they commute and induce a free action of $(S^1/\left<e^{\pi \sqrt{-1}}\right>)^k$-action on $M^n(P, \lambda)$ if $n_j > 1$ is odd for each $j \in \{i_1, \dots, i_k\} \subset \{1, \ldots, m\}$.
Then, the result follows from
\cite[Corollary 5.3]{grantfibsymm}, since  $S^1/\left<e^{\pi \sqrt{-1}}\right> \cong S^1$.
\end{proof}

\begin{corollary}\label{cor: tc_ub_action}
Let $M^n(P, \lambda)$ be a small cover over $P= \prod_{j=1}^m \Delta^{n_j} $. If $1 < n_j \leq 2^{r_j} -1 < 2n_j$ and $n_j$ is odd for each $j \in \{i_1, \dots, i_k\} \subset \{1, \ldots, m\}$ then $$ 2^{r_1}+\dots+2^{r_m}-m+1 \leq \TC(M^n(P, \lambda))\leq 2n-k+1.$$
\end{corollary}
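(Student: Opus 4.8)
The statement in question, \Cref{cor: tc_ub_action}, is an immediate consequence of the two preceding results, so the plan is essentially a bookkeeping exercise combining them.

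First I would invoke \Cref{highertc_small_cover_special}: since the hypothesis gives $n_j \le 2^{r_j}-1 < 2n_j$ for every $j \in \{1,\dots,m\}$ (the extra constraint $1 < n_j$ for indices in $\{i_1,\dots,i_k\}$ is not needed here), that theorem yields directly the lower bound $\TC(M^n(P,\lambda)) \ge (2^{r_1} + \cdots + 2^{r_m}) - (m-1) = 2^{r_1} + \cdots + 2^{r_m} - m + 1$. Next I would invoke \Cref{thm_some_odd_fact}: the hypothesis says $n_j > 1$ is odd for each $j \in \{i_1,\dots,i_k\}$, which is exactly what that theorem requires, so it gives the upper bound $\TC(M^n(P,\lambda)) \le 2n - k + 1$. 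Concatenating the two inequalities gives the displayed chain, and the proof is complete in two lines.

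There is no real obstacle here — the only thing to double-check is that the hypotheses of \Cref{cor: tc_ub_action} genuinely imply the hypotheses of both cited results simultaneously, which they do: the chain $1 < n_j \le 2^{r_j}-1 < 2n_j$ for $j \in \{i_1,\dots,i_k\}$ supplies both "$n_j$ odd, $n_j>1$" (for the upper bound) and "$n_j \le 2^{r_j}-1 < 2n_j$" (for the lower bound), while for the remaining indices the weaker hypothesis $n_j \le 2^{r_j}-1 < 2n_j$ is all that \Cref{highertc_small_cover_special} needs. So the proof is simply:

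\begin{proof}
The lower bound is immediate from \Cref{highertc_small_cover_special}, whose hypothesis $n_j \le 2^{r_j}-1 < 2n_j$ holds for all $j \in \{1,\dots,m\}$ by assumption. The upper bound is immediate from \Cref{thm_some_odd_fact}, whose hypothesis that $n_j > 1$ is odd for each $j \in \{i_1,\dots,i_k\}$ is part of the assumption. Combining the two inequalities gives the claim.
\end{proof}
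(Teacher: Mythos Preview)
Your proposal is correct and matches the paper's own proof, which is the single sentence ``This follows from Theorem~\ref{highertc_small_cover_special} and~\ref{thm_some_odd_fact}.'' Your additional remark that the inequality $n_j \le 2^{r_j}-1 < 2n_j$ must hold for all $j$ (so that $r_1,\dots,r_m$ are defined) while the oddness condition is only needed on the subset $\{i_1,\dots,i_k\}$ is a useful clarification of a slightly ambiguous hypothesis, but the argument itself is identical.
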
 
\begin{proof}
This follows from Theorems \ref{highertc_small_cover_special} and \ref{thm_some_odd_fact}. 
\end{proof}

\begin{corollary}\label{cor_gap}
 Let $M^n(P, \lambda)$ be a small cover over $P= \prod_{j=1}^m \Delta^{n_j} $. If $n_j=2^{s_j}+1$ for all $j =1, \dots, k < m$ and $n_j=2^{s_j}$ for $j=k+1, \ldots, m$ where each $s_j\geq 1$, then $$2n-2k-m+1\leq \TC(M^n(P, \lambda))\leq 2n-k+1.$$
\end{corollary}
\begin{proof}
 Note that $1 \leq n_j < 2^{s_j+1}-1 =2n_j-3$ for $j=1, \ldots, k$ and $1 \leq n_j < 2^{s_j+1}-1 =2n_j-1$ for $j=k+1, \ldots, m$. Thus, using \Cref{highertc_small_cover_special}, we get the lower bound as $\sum_{j=1}^k(2n_j-3)+\sum_{j=k+1}^m(2n_j-1)+1=2n-2k-m+1$ . Also, \Cref{thm_some_odd_fact} gives the upper bound. 
\end{proof}
\begin{remark} 
If $m=3$, the difference between the upper and lower bounds is 3 and 4 when $k=0$ and $k=1$ in \Cref{cor_gap}, respectively.
\end{remark}


\section{LS-category and topological complexity of Dold manifolds of real torus type}\label{sec: cat tc drs}
In this section, we recall Dold manifolds of Bott type from \cite{SZ22} and study some of its basic properties. Then, we compute some lower and upper bounds for the topological complexity of these manifolds.

Recall the construction of small covers from Section \ref{sec_lscat_tc_smcov}. 
We now describe the involutions on small cover $M^n(P,\lambda)$ generated by $\Z_2$-subgroups of $\Z_2^n$.
Note that any $\Z_2$-subgroup $G$ of $\Z_2^n$ is given by $G=\left<(a_1,\dots,a_n)\right>$ where $a_i\in \{1,-1\}$ for $1\leq i\leq n$. 
Then define 
$\tau \colon M^n(P,\lambda)\to M^n(P,\lambda)$ by 
\begin{equation}\label{eq: invo}
 \tau([x,(t_1,\dots,t_n)])=[x, (a_1t_1,\dots,a_nt_n)],
\end{equation}
where $x \in P$, $(t_1, \ldots, t_n) \in \Z_2^n$, and $[x,(t_1,\dots,t_n)]$ is the equivalence class of the point $(x,(t_1,\dots,t_n))$ under $\sim$ in \eqref{eq_sim}.  
One can observe that the $i$-th coordinate subgroup $1\times \dots \times 1\times \Z_2\times 1\times \dots \times 1$ of $\Z_2^n$ determine a nontrivial involution on $M^n(P,\lambda)$, say $i$-th involution. 
We observe that, if $(p,t_1,\dots, t_n)\sim (p,s_1,\dots,s_n)$, then $(p,a_1t_1,\dots, a_nt_n)\sim (p,a_1s_1,\dots,a_ns_n)$ as $a_i^{-1}t_i^{-1}a_is_i=t_i^{-1}s_i$.
This shows that the relation $\sim$ is preserved under any involution generated by the $\Z_2$-subgroup of $\Z_2^n$. The following example gives involution on some $M(P, \lambda)$ outside of these types. 

\begin{example}
Let $M(P, \lambda)$ be an $n$-dimensional real torus manifold such that there is an involution $\zeta'$ on $P$ satisfying $\lambda(\zeta'(F))=\lambda(F)$ for any facet $F$ of $P$. 
Then, $\zeta'$ induces  an involution $\zeta$
on $M(P, \lambda)$ which commutes with the $\Z_2^n$-action on $M(P, \lambda)$. Let $\mathfrak{q} \colon M(P, \lambda) \to P$ be the orbit map. Suppose that the set $\{\lambda(F) ~|~ F \in \mathcal{F}(P) ~\mbox{and}~ F \cap P^{\left< \zeta' \right>} \neq 0\}$ generates $\Z_2^n$. Then one can show from the equivalence relation $\sim$ in \eqref{eq_sim} that $\mathfrak{q}^{-1}(P^{\left< \zeta' \right>})$ is path-connected. 
\end{example}

Recall the height function $\mathfrak{f}$ on a simple polytope $P$ that was introduced in the proof of Theorem 3.1 of \cite{DJ}. One can orient the edges of polytope so that the $\mathfrak{f}$ increases along them.
This orientation makes the $1$-skeleton of $P$ into a directed graph.

\begin{theorem}\label{thm: eqctsmallcover}
Let $M$ be an $n$-dimensional small cover such that $M^{\left<\tau \right>}$ is path-connected for some involution $\tau$ on $M$ and the $\left < \tau \right >$-action commutes with the $\Z_2^n$-action on $M$. Then, there exist $(n+1)$ $\left<\tau\right>$-invariant categorical sets of $M$ and 
\begin{equation}\label{eq: cattorusmfd}
 \ct_{\left< \tau \right>}(M)=n+1.  
 \end{equation} 
\end{theorem}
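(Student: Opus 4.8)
## Proof Proposal for Theorem \ref{thm: eqctsmallcover}

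The plan is to pin down $\ct_{\left<\tau\right>}(M)$ by proving $n+1$ is both a lower and an upper bound, producing the $n+1$ invariant categorical sets explicitly for the upper bound. For the lower bound I would simply forget the $\left<\tau\right>$-action: any $\left<\tau\right>$-invariant categorical cover is in particular an ordinary categorical cover, so $\ct_{\left<\tau\right>}(M)\geq\ct(M)$. Since $M$ is a small cover, it is a real torus manifold over a simple polytope $P$, and $\partial P$ is the boundary of the simple polytope $P$ itself, so \Cref{thm: ct rTmfd} (applied with $Q=P$) yields $\ct(M)=n+1$; equivalently, for a vertex $v=F_1\cap\cdots\cap F_n$ of $P$ the product $x_1\cdots x_n$ of the mod $2$ Poincar\'e duals of the characteristic submanifolds is the class of a point, hence nonzero, so $\cl_{\Z_2}(M)\geq n$. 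Thus $\ct_{\left<\tau\right>}(M)\geq n+1$.

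For the upper bound I would make the Morse-theoretic handle decomposition of a small cover $\left<\tau\right>$-equivariant. Since $\tau$ commutes with the $\Z_2^n$-action it descends to an involution $\bar\tau$ of $P$ permuting the faces, and the commutation forces $\lambda\circ\bar\tau=\lambda$. First choose a generic height function $\mathfrak{f}$ on $P$ (in the sense of \cite{DJ} recalled above) that is $\bar\tau$-invariant, obtained by averaging a generic linear functional over $\left<\bar\tau\right>$ and perturbing; then $\mathfrak{f}$ orients the $1$-skeleton $\bar\tau$-equivariantly, so the associated CW/handle decomposition of $M$ (one cell $e_v$ of dimension $\mathrm{ind}(v)$ per vertex $v$ of $P$) becomes $\tau$-equivariant and dimension-preserving. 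For $0\leq k\leq n$ let $W_k$ be a $\tau$-invariant open neighbourhood of the union of the open $k$-cells that $\tau$-deformation retracts onto $\bigsqcup_{\mathrm{ind}(v)=k}\mathring{e}_v$; running the standard proof that $\ct(X)\leq\dim X+1$ equivariantly gives $W_0\cup\cdots\cup W_n=M$.

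It then remains to check that each $W_k$ is $\left<\tau\right>$-categorical, and this is precisely where path-connectedness of $M^{\left<\tau\right>}$ is used. Each $W_k$ is a disjoint union of balls permuted by $\tau$: the $\tau$-fixed balls retract onto their central $\tau$-fixed points (the images of $\bar\tau$-fixed vertices, which lie in $M^{\left<\tau\right>}$), while each $\tau$-swapped pair of balls retracts onto a free orbit $\{x,\tau x\}$ by transporting a retraction of one ball to the other via $\tau$. Fixing a basepoint $x_0\in M^{\left<\tau\right>}$, I would slide the $\tau$-fixed pieces to $x_0$ along paths \emph{inside} the path-connected set $M^{\left<\tau\right>}$ (such paths are $\tau$-fixed, hence give $\tau$-equivariant homotopies), and slide each swapped pair using a path $\gamma$ from $x$ to $x_0$ via $t\mapsto(\gamma(t),\tau\gamma(t))$, whose terminal image is the single point $\{x_0\}$. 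Concatenating with the retractions exhibits the inclusion $W_k\hookrightarrow M$ as $\tau$-homotopic to the constant map at $x_0$, so $W_k$ is $\left<\tau\right>$-categorical and $\ct_{\left<\tau\right>}(M)\leq n+1$.

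The step I expect to be the genuine obstacle is making the height function and the resulting handle decomposition truly $\bar\tau$-equivariant — concretely, guaranteeing that $\bar\tau$ preserves the vertex-index partition and that no $\bar\tau$-reversed edge spoils the Morse condition. I would try to bypass this either by replacing each index class by its $\bar\tau$-orbit before thickening (the only feature of a component of $W_k$ the argument uses is that it is a ball of the right dimension), or by showing that the hypothesis that $M^{\left<\tau\right>}$ is path-connected already rules out the problematic configurations.
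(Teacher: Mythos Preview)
Your approach coincides with the paper's at the structural level: the lower bound comes from $\ct_{\langle\tau\rangle}(M)\geq\ct(M)=n+1$ via \Cref{thm: ct rTmfd}, and the upper bound from the height-function decomposition of $P$, setting $Y_j=\bigcup_{\mathfrak{f}(v)=j}\mathfrak{q}^{-1}(U_v)$ for $j=0,\dots,n$ and showing each $Y_j$ is $\langle\tau\rangle$-categorical because it retracts to a finite subset of $M^{\langle\tau\rangle}$, which is path-connected.

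The one real difference is exactly the step you single out as the obstacle. The paper does \emph{not} try to make $\mathfrak{f}$ (or the handle decomposition) $\bar\tau$-equivariant. Instead it uses that each open star $\mathfrak{q}^{-1}(U_v)$ already deformation-retracts, through the $\Z_2^n$-equivariant structure, onto the single $\Z_2^n$-fixed point $\mathfrak{q}^{-1}(v)$; since $\tau$ commutes with the $\Z_2^n$-action, the paper asserts that $Y_j$ is $\tau$-invariant and that its retracted image $\{\mathfrak{q}^{-1}(v):\mathfrak{f}(v)=j\}$ sits inside $M^{\langle\tau\rangle}$. After that, path-connectedness of $M^{\langle\tau\rangle}$ finishes the proof exactly as in your last paragraph, but without any need for your fixed-ball/swapped-ball dichotomy: every component already lands on a $\tau$-fixed point.

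Your caution is not misplaced, though. The paper's invariance claim is literally correct only when the induced map $\bar\tau$ on $P$ is the identity (e.g.\ when $\tau$ comes from a $\Z_2$-subgroup of $\Z_2^n$ as in \eqref{eq: invo}, which is the case driving the applications); if $\bar\tau$ genuinely permutes vertices across index levels, one needs something like your averaging or an orbit-saturation of the $V_j$'s. So your extra work is what it takes to make the argument watertight in the full generality of the hypothesis, while in the setting the paper actually uses it is unnecessary and the simpler route suffices.
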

\begin{proof}
We have $M\cong M^n(P,\lambda)$ for a $\Z_2$-characteristic pair $(P, \lambda)$ by \Cref{prop_equivalence}.
 We also have $n+1=\ct(M^n(P,\lambda))$ by \Cref{thm: ct rTmfd}. Therefore, using \cite[Section 2]{angel2018equivariant}, one can get $n+1=\ct(M^n(P,\lambda))\leq \ct_{\left<\tau\right>}(M^n(P,\lambda))$, since the fixed point set $M^{\tau}$ is a path-connected space.

Now we describe the $\tau$-invariant categorical cover of $M^n(P,\lambda)$ containing $n+1$ many sets. For any vertex $v$ of $P$, let $U_v:=P\setminus \cup_{v\notin F}F$ be the open subset of $P$. Let $V(P)$ be the set of vertices of $P$ and 
$V_j=\{v\in V(P) ~: ~ \mathfrak{f}(v)=j\},$ where $\mathfrak{f}$ is the height function of $P$. The DJ-construction \cite[Section 1]{DJ} of $M$ gives us 
$\mathfrak{q}^{-1}(U_v)=\Z_2^n\times U_v/\sim.$ 
Let $Y_j=\cup_{v\in V_j} \mathfrak{q}^{-1}(U_v)$ for $j=0,\dots,n$. Since the actions of $\left <\tau \right >$ and $\Z_2^n$ commute, the space $Y_j$ is $\tau$-invariant. 
Observe that, \[\mathfrak{q}^{-1}(U_v)=\frac{\Z_2^n\times U_v}{\sim}\simeq \frac{\Z_2^n\times \{v\}}{(g,v)\sim (h,v)},\] for any $g,h\in \Z_2^n$ as $v$ is a fixed point of an action of $\Z_2^{n}$ on $M$. Observe that ${P=\cup_{v\in V(P)} U_v}$ and $U_v \cap U_w = \emptyset$ if $ \mathfrak{f}(v)= \mathfrak{f}(w)$.
Therefore, for each $0\leq j\leq n$, one can see that 
\begin{equation}\label{eq: inv cat cover}
Y_j\simeq \frac{\Z_2^n\times V_j}{\sim}\simeq \{[(1,\dots,1,v)] ~: ~ v\in V_j\}\subset M^n(P,\lambda)^{\tau}. 
\end{equation}
Since $M^n(P,\lambda)^{\tau}$ is path connected, ${Y_j}$ is 
$\tau$-invariantly contractible in $M$. 
This proves the first part of the claim.

Now, it is evident that the homotopy equivalence in \eqref{eq: inv cat cover} is $\tau$-equivariant for $j=0, \ldots, n$. 
Therefore, the cover $\{Y_j ~:~ 0\leq j\leq n\}$ is a $\tau$-equivariant categorical cover of $M$.
This gives $\ct_{\left <\tau\right>}(M)\leq n+1$.    
\end{proof}

We note that $n+1 \leq \ct_{\left< \tau \right>}(M) \leq \# M^{\Z_2^n}$ for any involution determined by a $\Z_2$-subgroup of $\Z_2^n$.
Consider a small cover $M=M^n(P,\lambda)$. Observe that the column vector of $\lambda$ determines a $\Z_2$-action of $M^n(P,\lambda)$. With this, we have the following result.
\begin{theorem}\label{thm: eq cat}
    Let $M:=M^n(P,\lambda)$ be a small cover. Then 
    \[\ct(M_{F_{i_1}}\cup\dots \cup M_{F_{i_k}}\cup M^{\Z_2^n})\leq \ct_{\Z_2}(M)\leq   \# M^{\Z_2^n},\]
    where $M_{F_{i_j}}$ is the characteristic submanifold corresponding to the facet $F_{i_j}$ for $1\leq j\leq k$ such that $T_{F_{i_j}}=T_{F_{i_l}}$ for $1\leq j\neq l\leq k$.
\end{theorem}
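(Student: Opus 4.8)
The $\Z_2$ appearing in the statement is the subgroup $\langle\tau\rangle$ of $\Z_2^n$ generated by the characteristic vector attached to the facets $F_{i_1},\dots,F_{i_k}$: the hypothesis $T_{F_{i_1}}=\dots=T_{F_{i_k}}$ forces $\lambda(F_{i_1})=\dots=\lambda(F_{i_k})=:\mu$, and the corresponding involution $\tau$ on $M=M^n(P,\lambda)$ given by \eqref{eq: invo} is exactly the restriction of the $\Z_2^n$-action to $\langle\mu\rangle$. The plan is to prove the two inequalities separately, using for the right one that $\Z_2^n$-equivariant data (invariant sets, equivariant homotopies, equivariant compressions) restrict to $\langle\tau\rangle$-equivariant data, and for the left one that $A:=M_{F_{i_1}}\cup\dots\cup M_{F_{i_k}}\cup M^{\Z_2^n}$ sits inside the $\langle\tau\rangle$-fixed set $M^{\langle\tau\rangle}$.

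For the upper bound I would reuse the cover constructed in the proof of \Cref{thm: eqctsmallcover}. For each vertex $v\in V(P)$ set $U_v=P\setminus\bigcup_{v\notin F}F$; then $\mathfrak{q}^{-1}(U_v)$ is a standard chart of the Davis--Januszkiewicz construction, $\Z_2^n$-equivariantly homeomorphic to $\R^n$ with the coordinate reflection action, hence $\Z_2^n$-equivariantly (and a fortiori $\langle\tau\rangle$-equivariantly) contractible onto the fixed point $\mathfrak{q}^{-1}(v)$. Since $P=\bigcup_v U_v$, the family $\{\mathfrak{q}^{-1}(U_v)\}_{v\in V(P)}$ is a $\langle\tau\rangle$-categorical open cover of $M$ of cardinality $\#V(P)$; as the $\Z_2^n$-fixed points are precisely the preimages of the vertices, $\#V(P)=\#M^{\Z_2^n}$, and this gives $\ct_{\Z_2}(M)\le\#M^{\Z_2^n}$.

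For the lower bound one first checks $A\subseteq M^{\langle\tau\rangle}$: by \eqref{eq_sim} a point $[x,g]$ is fixed by $\tau=\tau_\mu$ exactly when $\mu\in G_{F(x)}$, which holds on all of $M_{F_{i_j}}=\mathfrak{q}^{-1}(F_{i_j})$ because there $\mu=\lambda(F_{i_j})$ is one of the generators of $G_{F(x)}$, while $M^{\Z_2^n}\subseteq M^{\langle\tau\rangle}$ is automatic. Now take a $\langle\tau\rangle$-categorical open cover $\{W_1,\dots,W_r\}$ of $M$ realizing $r=\ct_{\Z_2}(M)$. A free $\langle\tau\rangle$-orbit contains no fixed point, so any $W_i$ that meets $M^{\langle\tau\rangle}$ must $\langle\tau\rangle$-compress to a single (fixed) point; restricting the compressing equivariant homotopy to $W_i\cap M^{\langle\tau\rangle}$, equivariance keeps the whole track inside $M^{\langle\tau\rangle}$, so $W_i\cap M^{\langle\tau\rangle}$ is contractible in $M^{\langle\tau\rangle}$. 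The remaining $W_i$ are disjoint from $M^{\langle\tau\rangle}$, so this produces a categorical cover of $M^{\langle\tau\rangle}$ of size $\le r$, i.e.\ $\ct(M^{\langle\tau\rangle})\le\ct_{\Z_2}(M)$.

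The remaining step, and the one I expect to be the main obstacle, is to descend from the whole fixed set to $A$, that is, to prove $\ct(A)\le\ct(M^{\langle\tau\rangle})$. In general $A$ is a proper subspace of $M^{\langle\tau\rangle}$: writing $M^{\langle\tau\rangle}=\mathfrak{q}^{-1}(\Pi_\mu)$ with $\Pi_\mu=\{x\in P:\mu\in G_{F(x)}\}$ a closed union of faces of $P$, the set $\Pi_\mu$ may contain faces $G$ of codimension $\ge 2$ with $\mu\in G_G$ that lie in no $F_{i_j}$, whereas $A=\mathfrak{q}^{-1}\big((F_{i_1}\cup\dots\cup F_{i_k})\cup V(P)\big)$ only records the codimension-one members of $\Pi_\mu$ together with its vertices. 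I would prove $\ct(A)\le\ct(M^{\langle\tau\rangle})$ component by component: a connected component of $M^{\langle\tau\rangle}$ containing some $M_{F_{i_j}}$ is obtained from the corresponding component of $A$ by attaching the strata $\mathfrak{q}^{-1}(G)$ for the extra faces $G$, which are of strictly smaller dimension and are glued along unions of characteristic submanifolds and $\Z_2^n$-fixed points, so these attachments should not pull the category below that of the component of $A$; and a component of $M^{\langle\tau\rangle}$ meeting no $F_{i_j}$ intersects $A$ only in finitely many $\Z_2^n$-fixed points, of category $1$. Making this facial and dimensional bookkeeping rigorous is the heart of the matter; in the case where $\Pi_\mu$ contains no face outside the $F_{i_j}$ and $V(P)$ — which occurs for many standard characteristic functions — one has $A=M^{\langle\tau\rangle}$ and the inequality is immediate.
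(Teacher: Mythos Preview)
Your upper bound is exactly the paper's argument: the Davis--Januszkiewicz vertex charts $\mathfrak{q}^{-1}(U_v)$, one per vertex, form a $\Z_2$-categorical cover of size $\#V(P)=\#M^{\Z_2^n}$.

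For the lower bound the paper is much more compressed than you. It simply asserts that the full fixed set equals $A$, i.e.\ $M^{\langle\tau\rangle}=M_{F_{i_1}}\cup\dots\cup M_{F_{i_k}}\cup M^{\Z_2^n}$, and then invokes the general inequality $\ct(M^{G})\le\ct_{G}(M)$ (cited as \cite[Corollary~2.9]{BS}). Your hand-made argument for $\ct(M^{\langle\tau\rangle})\le\ct_{\langle\tau\rangle}(M)$ is essentially a proof of that cited result, so on that part you agree with the paper, only more explicitly.

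Where you diverge is precisely the point you flag as the main obstacle. The paper treats $A=M^{\langle\tau\rangle}$ as an observation and moves on; you notice that a codimension-$\ge 2$ face $G$ with $\mu\in G_G$ but $G\not\subset F_{i_j}$ for any $j$ contributes to $M^{\langle\tau\rangle}$ without contributing to $A$. That really can happen: on the cube $I^3$ with $\lambda(F_1)=\lambda(F_4)=e_1$, $\lambda(F_2)=e_2$, $\lambda(F_5)=e_1+e_2$, $\lambda(F_3)=\lambda(F_6)=e_3$ and $\mu=e_2$, the edges $F_1\cap F_5$ and $F_4\cap F_5$ lie in $\Pi_\mu$ but in no facet with label $e_2$, so $M^{\langle\tau\rangle}\supsetneq A$. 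Thus the equality the paper uses is not valid in general, and the reduction $\ct(A)\le\ct(M^{\langle\tau\rangle})$ you try to salvage is a genuine extra step (LS-category is not monotone under inclusion, and with the additive convention for disconnected spaces used here it is sensitive to how isolated $\Z_2^n$-fixed points get absorbed into higher-dimensional fixed components). Your facial bookkeeping sketch is the right shape of argument, but as you say it is not yet a proof; the paper simply does not engage with this issue.
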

\begin{proof}
  Observe that the fixed point set $M^{\Z_2}=M_{F_{i_1}}\cup\dots \cup M_{F_{i_1}}\cup M^{\Z_2^n}$. Then, the left inequality follows from \cite[Corollary 2.9]{BS}. 
  Let $\mathfrak{q} \colon M \to P$ be an orbit map. Then, one can observe that 
  $\{q^{-1}(U_v) \mid v\in V(P)\}$ forms a $\Z_2$-categorical cover of $M$, where 
  \[\mathfrak{q}^{-1}(U_v)=\frac{\Z_2^n\times U_v}{\sim}\simeq \frac{\Z_2^n\times \{v\}}{(g,v)\sim (h,v)}.\]
  This gives the right inequality.
  \end{proof}

\begin{example}\label{exm: eqcatM}
\begin{enumerate}
    \item  Let $P=I^2$ and consider that facets are labelled by $\{F_1,F_2,F_3,F_4\}$. Define the characteristic function as follows $\lambda(F_1)=(1,0)=\lambda(F_3)$, $\lambda(F_2)=(0,1)$ and $\lambda(F_4)=(1,1)$. 
    Note that $M^n(P,\lambda)=M$ is the Klein bottle.
    Let $\Z_2$-action on $M$ be given by the subgroup $\Z_2\times \{1\}$. Then one can observe that $M^{\Z_2}=M_{F_1}\sqcup M_{F_3}\cong S^1\sqcup S^1$.
    Therefore, from \Cref{thm: eq cat} we get $\ct_{\Z_2}(M)=4$.
    \item One can generalize the above example. Let $P=P_{2m}$ be the $2m$-gon. We label facets of $P_{2m}$ by $\{F_1,F_2,\dots F_{2m-1},F_{2m}\}$. Then note that we can define a characteristic function $\lambda$ such that  $\lambda(F_{2i-1})=(1,0)$ for $1\leq i\leq m$. 
    Then the $\Z_2$-action on $M=M^n(P,\lambda)$ determined by the $\Z_2$-subgroup $\Z_2\times \{1\}$ has fixed point
    $M^{\Z_2}=\cup_{i=1}^mM_{F_{2i-1}}$. Note that $M^{\Z_2}$ contains all vertices of $P_{2m}$ and it is homeomorphic to disjoint union of $m$-many circles as $M_{F_i}\cong S^1$ for each $1\leq i\leq m$. 
    Therefore, \Cref{thm: eq cat} gives us 
    \[2m=\ct(\sqcup_{m}S^1)\leq \ct_{\Z_2}(M)\leq 2m.\]
    Therefore, $\ct_{\Z_2}(M)=2m$.
    
\end{enumerate}
   
\end{example}

The following result is a consequence of \cite[Corollary 5.8]{CG} and \Cref{thm: eqctsmallcover}.
\begin{corollary}
 Let $M$ be an $n$-dimensional small cover such that $M^{\left<\tau \right>}$ is path-connected for some involution $\tau$ on $M$ and the $\left < \tau \right >$-action commutes with the $Z_2^n$-action on $M$. Then, 
$$\TC_{\left< \tau \right>}(M)\leq 2n+1.$$  
\end{corollary}

Next, we discuss the main objectives of this section. The projective product spaces were introduced by Davis in \cite{Davis} as follows:
\[P(p_1, \ldots, p_r)= \frac{S^{p_1}\times \cdots \times S^{p_r}}{(x_1, \dots, x_{r})\sim (-x_1,\dots, -x_{r})}.\] Davis computed the mod-$2$ cohomology algebra of this space.
Recall that an $n$-dimensional real torus manifold admits $\Z_2^n$-action such that the orbit space of this action is a nice manifold with corners. Then one can note that any $\Z_2$-subgroup of $\Z_2^n$ induces an involution on $M$. Now, we consider the following identification spaces. Define 
\[D(M; p_1,\dots,p_r) :=\displaystyle\frac{ M \times S^{p_1}\times\dots \times S^{p_r} }{(y,x_1\dots,x_r)\sim (\tau(y),-x_1,\dots,-x_r)},\] 
where $\tau \colon M\to M$ is an involution.
Then $D(M; p_1,\dots,p_r) $ is a manifold of dimension $n+p_1+\cdots + p_r$. We call this manifold a Dold manifold of real torus type. 
Note that we have a fiber bundle 
\[M \hookrightarrow{}D(M, p_1,\dots,p_r)\stackrel{\mathfrak{p}}{\longrightarrow}P(p_1,\dots,p_r),\] where $P(p_1,\dots,p_r)$ is the projective product space with $p_1 \leq \cdots \leq p_r$.

The following result describes the mod-$2$ cohomology ring of $D(M; p_1, \dots, p_r)$. Let $P$ be a retractable nice manifold with corners in the sense of \cite[Definition 3.2]{Sar}, and $M^n(P, \lambda)$ a real torus manifold. Then $M^n(P, \lambda)$ has a $\Z_2^n$-equivariant cell-structure. Thus, the associated $\Z_2^n$ acts trivially on $H^*(M^n(P, \lambda);\Z_2)$. We note that any simple polytope and the polytope in \Cref{ex: susp pol} are retractable nice manifolds with corners.

\begin{proposition}[{\cite[Proposition 4.6]{SZ22}}]\label{thm CringDMpps}
Let $p_1 \leq \cdots \leq p_r$ and $M$ be a real torus manifold over a retractable $P$. Then the cohomology ring $H^*(D((M,p_1,\dots,p_r);\Z_2)$ is isomorphic as a graded $\Z_2$-algebra to  
 \[H^*(D(M, p_1, \ldots, p_r)); \Z_2) \cong \Z_2[\alpha]/(\alpha^{p_1+1}) \otimes \Lambda[\alpha_2,\dots,\alpha_r] \otimes H^*(M ; \Z_2),\] 
 where $|\alpha| = p_1$, $|\alpha_i| = p_i$ for $i > 1$, and $\Lambda$ denotes the mod-$2$-exterior algebra. 
\end{proposition}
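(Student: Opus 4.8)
The plan is to analyse the mod-$2$ Serre spectral sequence of the fibre bundle
\[M \hookrightarrow D(M, p_1,\dots,p_r) \xrightarrow{\mathfrak p} P(p_1,\dots,p_r)\]
recorded above and to show that it collapses multiplicatively onto the stated tensor product. First I would recall Davis's computation \cite{Davis} that $H^*(P(p_1,\dots,p_r);\Z_2)\cong\Z_2[\alpha]/(\alpha^{p_1+1})\otimes\Lambda[\alpha_2,\dots,\alpha_r]$, and observe that $\mathfrak p$ is the $M$-bundle associated to the double cover $S^{p_1}\times\cdots\times S^{p_r}\to P(p_1,\dots,p_r)$ through the involution $\tau$, so that the monodromy of $\mathfrak p$ acts on $H^*(M;\Z_2)$ through $\tau_*$. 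Since $\tau$ is induced by a $\Z_2$-subgroup of $\Z_2^n$ and $P$ is retractable, $\Z_2^n$ --- hence $\tau$ --- acts trivially on $H^*(M;\Z_2)$ by the $\Z_2^n$-equivariant cell structure recalled just before the statement; therefore $E_2=H^*(P(p_1,\dots,p_r);\Z_2)\otimes H^*(M;\Z_2)$ as a bigraded $\Z_2$-algebra.

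To force the collapse I would produce classes in $H^*(D(M,p_1,\dots,p_r);\Z_2)$ whose restrictions to each fibre $M$ form a $\Z_2$-basis of $H^*(M;\Z_2)$, so that the Leray--Hirsch theorem applies. Here the retractability of $P$ is used once more: $H^*(M^n(P,\lambda);\Z_2)$ then carries a $\Z_2$-basis represented by the orbit-preimages $\mathfrak q^{-1}(E)$ of a distinguished family of faces $E$ of $P$, and in general such higher-codimension preimages are needed, since $H^*(M;\Z_2)$ need not be generated in degree one (see \Cref{ex: susp pol}). Each $\mathfrak q^{-1}(E)$ is $\Z_2^n$-invariant, hence $\tau$-invariant, so $\mathfrak q^{-1}(E)\times S^{p_1}\times\cdots\times S^{p_r}$ is preserved by the diagonal involution and descends to a closed submanifold $\mathcal D_E\subseteq D(M,p_1,\dots,p_r)$ meeting every fibre of $\mathfrak p$ transversally in $\mathfrak q^{-1}(E)$; thus its mod-$2$ dual class $\zeta_E$ restricts on each fibre to the class of $\mathfrak q^{-1}(E)$. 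Leray--Hirsch then yields that $\{\zeta_E\}$ is a free $\mathfrak p^*H^*(P(p_1,\dots,p_r);\Z_2)$-basis of $H^*(D(M,p_1,\dots,p_r);\Z_2)$, which is already the asserted isomorphism of $H^*(P(p_1,\dots,p_r);\Z_2)$-modules.

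It remains to upgrade this to a ring isomorphism. Writing $\alpha,\alpha_2,\dots,\alpha_r$ also for their images under $\mathfrak p^*$, the relations $\alpha^{p_1+1}=0$ and $\alpha_i^2=0$ persist in $H^*(D(M,p_1,\dots,p_r);\Z_2)$, being pullbacks; in particular the ideal $(\alpha,\alpha_2,\dots,\alpha_r)$ is nilpotent. To see that the $\zeta_E$ realize the ring $H^*(M;\Z_2)$, one works with the submanifolds $\mathcal D_E$ directly: after a $\tau$-equivariant perturbation making the relevant $\mathfrak q^{-1}(E)$ mutually transverse in $M$, the intersection $\mathcal D_E\cap\mathcal D_{E'}$ is the descent of $(\mathfrak q^{-1}(E)\cap\mathfrak q^{-1}(E'))\times S^{p_1}\times\cdots\times S^{p_r}$, so mod-$2$ intersection theory on $D(M,p_1,\dots,p_r)$ computes $\zeta_E\zeta_{E'}$ from the corresponding product in $H^*(M;\Z_2)$; combined with the collapse this identifies the multiplication map $H^*(P(p_1,\dots,p_r);\Z_2)\otimes H^*(M;\Z_2)\to H^*(D(M,p_1,\dots,p_r);\Z_2)$ as a ring isomorphism.

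The main obstacle is exactly this multiplicative-extension step: Leray--Hirsch gives only the additive splitting, and a priori the relations of $H^*(M;\Z_2)$ are recovered only modulo the (nilpotent) ideal $(\alpha,\alpha_2,\dots,\alpha_r)$, so controlling these correction terms is what requires the intersection-theoretic input, and one must be careful to carry out the perturbations $\tau$-equivariantly. A technically smoother alternative, which I would probably adopt, bypasses the issue by presenting $D(M,p_1,\dots,p_r)$ as an iterated sphere bundle $S^{p_r}\hookrightarrow D(M,p_1,\dots,p_r)\to D(M,p_1,\dots,p_{r-1})$, each being the sphere bundle of a sum of copies of the real line bundle whose Stiefel--Whitney class is $\alpha$, and inducting on $r$ by means of the Gysin sequence, the base case being $H^*((M\times S^{p_1})/\Z_2;\Z_2)$ computed from the bundle over $\R P^{p_1}$ with fibre $M$ as in the second paragraph.
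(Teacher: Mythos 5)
The paper cites this proposition from \cite{SZ22} without reproducing a proof, so there is no internal argument to compare against; your reconstruction via the Serre spectral sequence of $M \hookrightarrow D(M, p_1, \ldots, p_r) \to P(p_1, \ldots, p_r)$ is the natural approach and is very likely close to the one in the cited reference. You correctly locate both uses of the retractability hypothesis: first, triviality of the local coefficient system (since $\tau$, arising from a $\Z_2$-subgroup of $\Z_2^n$, acts trivially on $H^*(M;\Z_2)$ by the $\Z_2^n$-equivariant cell structure), and second, the availability of a $\tau$-invariant geometric basis of $H^*(M;\Z_2)$ given by preimages of faces, which is what lets you manufacture global Leray--Hirsch classes $\zeta_E$.

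The gap you yourself flag is real, and neither proposed fix is yet airtight. In the intersection-theoretic route, transversality in the fibre does propagate to transversality of the $\mathcal{D}_E$ in the total space as you observe, but one must check that a perturbation can simultaneously be made $\Z_2^n$-equivariant (so that it descends to the quotient) and transverse, and that the resulting mod-$2$ duals agree with the spectral-sequence representatives that already survive to $E_\infty$. In the Gysin route, identifying $D(M;p_1,\dots,p_r)\to D(M;p_1,\dots,p_{r-1})$ as the sphere bundle of $(p_r+1)L$, whose mod-$2$ Euler class $w_1(L)^{p_r+1}=\alpha^{p_r+1}$ vanishes since $p_r\geq p_1$, is correct. (Note that this argument requires $|\alpha|=1$, which is indeed Davis's normalisation; the degree $|\alpha|=p_1$ as printed in the paper's statement must be a typo, since otherwise the cup-length count $p_1+r-1$ used in \Cref{thm: ctDmp} would overshoot the dimension for $p_1>1$.) But a vanishing mod-$2$ Euler class only gives the additive splitting once more; the multiplicative extension $\alpha_r^2=0$, rather than $\alpha_r^2$ being some nonzero class pulled back from the base, still requires an extra step (for instance via $\operatorname{Sq}^{p_r}$ and the Wu formula using the Stiefel--Whitney classes of $(p_r+1)L$, or by exhibiting a section). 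Supply one of these multiplicative-extension arguments and the proof is complete.
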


We now compute the LS-category of $D(M; p_1, \dots, p_r)$.

\begin{theorem}\label{thm: ctDmp}
Let $M$  be a real torus manifold over a retractable $P$ and $p_1 \leq \cdots \leq p_r$. Then 
\begin{equation}\label{eq: ctDMrealtmfd}
\ct(D(M; p_1, \dots, p_r)) \geq  \cl_{\Z_2}(M) + p_1+r.  
\end{equation}
Moreover, if $M$ is a small cover of dimension $n$  with $M^{\left< \tau\right >}$ is path connected, then \[\ct(D(M; p_1, \dots, p_r)) =  n + p_1+r.\]
\end{theorem}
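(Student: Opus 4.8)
The plan is to attack the two assertions separately: the lower bound via cup-length, and the upper bound (in the small-cover case) via an explicit categorical cover. For the lower bound, I would start from the cohomology description in \Cref{thm CringDMpps}, namely
\[
H^*(D(M; p_1,\dots,p_r);\Z_2) \cong \Z_2[\alpha]/(\alpha^{p_1+1}) \otimes \Lambda[\alpha_2,\dots,\alpha_r] \otimes H^*(M;\Z_2).
\]
Choose cohomology classes $z_1,\dots,z_{\cl_{\Z_2}(M)}$ in $H^*(M;\Z_2)$ whose product is nonzero; under the tensor decomposition these pull back to classes in $H^*(D(M;\ldots);\Z_2)$ (via the projection $\mathfrak{p}$ onto $M$, or directly as $1\otimes1\otimes z_i$). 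Then the product
\[
\alpha^{p_1}\cdot \alpha_2 \cdots \alpha_r \cdot z_1 \cdots z_{\cl_{\Z_2}(M)}
\]
is a nonzero element of the algebra, being a nonzero pure tensor in the three factors, so it is a product of $p_1 + (r-1) + \cl_{\Z_2}(M)$ positive-degree classes that is nonzero. Hence $\cl_{\Z_2}(D(M;\ldots)) \geq \cl_{\Z_2}(M) + p_1 + r - 1$, and \eqref{eq:lscat_tc_lbd} gives $\ct(D(M;\ldots)) \geq \cl_{\Z_2}(M) + p_1 + r$, which is \eqref{eq: ctDMrealtmfd}. The only point needing care is to confirm $\alpha^{p_1}$ is nonzero and $\alpha_2,\dots,\alpha_r$ multiply to a nonzero exterior monomial, which is immediate from the stated ring structure.

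For the upper bound in the small-cover case, I would use the fibre bundle $M \hookrightarrow D(M;p_1,\dots,p_r) \xrightarrow{\mathfrak{p}} P(p_1,\dots,p_r)$ together with the standard subadditivity-type bound for $\ct$ of a fibration: if $F \hookrightarrow E \to B$ is a fibration then $\ct(E) \leq \ct(B)\cdot \ct(F)$, or, more usefully here, the sharper bound coming from pulling back a categorical cover of the base and a categorical cover of the fibre. Actually the cleanest route is: $D(M;p_1,\dots,p_r)$ is itself (up to the involution $\tau$) built from $M \times S^{p_1}\times\cdots\times S^{p_r}$, and $\ct$ of the quotient can be controlled using the $\tau$-invariant (equivariant) categorical sets of $M$ produced in \Cref{thm: eqctsmallcover} — there are exactly $n+1$ of them, since $M^{\langle\tau\rangle}$ is path-connected. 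I would combine the $n+1$ invariant categorical sets on the $M$-factor with a minimal categorical cover of $S^{p_1}\times\cdots\times S^{p_r}$ and descend to the quotient, or, more directly, invoke the mapping-theorem-style inequality from equivariant LS-category (as in \cite{angel2018equivariant} / \cite{CG}) to get $\ct(D(M;p_1,\dots,p_r)) \leq \ct_{\langle\tau\rangle}(M) + p_1 + (r-1) = n+1 + p_1 + r - 1$, matching the lower bound since $\cl_{\Z_2}(M) = \dim M = n$ for a small cover by \Cref{thm: ct rTmfd}-type reasoning (top class nonzero). The two bounds then force equality.

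The main obstacle I anticipate is the upper bound: getting a categorical cover of $D(M;p_1,\dots,p_r)$ of the right size $n + p_1 + r$ rather than the naive product bound $(n+1)(p_1+1)2^{r-1}$ or similar. The key idea to push through is that the $p_1+r$ extra open sets should come from a categorical cover of $P(p_1,\dots,p_r)$ — Davis's computation gives $\ct(P(p_1,\dots,p_r)) = p_1 + r$ exactly when the $p_i$ are arranged as $p_1 \leq \cdots \leq p_r$ (the cup-length of $\Z_2[\alpha]/(\alpha^{p_1+1})\otimes\Lambda[\alpha_2,\dots,\alpha_r]$ is $p_1+r-1$, and the dimension bound or an explicit cover gives the matching upper bound) — pulled back along $\mathfrak{p}$, each intersected with the $\tau$-invariant categorical sets of $M$; the subtlety is that one does not multiply the counts but rather uses that over each categorical open set $U$ of the base the bundle restricted to $U$ is $\tau$-equivariantly trivial, so a single invariant categorical set of $M$ suffices over each such $U$, and then one invokes the fact that $\ct_{\langle\tau\rangle}(M) = n+1$ to conclude — I would be careful here to cite the precise equivariant fibration inequality being used and to verify its hypotheses (free action on the sphere factors, $\tau$ commuting with the $\Z_2^n$-action, path-connectedness of $M^{\langle\tau\rangle}$).
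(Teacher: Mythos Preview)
Your proposal is correct and follows essentially the same approach as the paper. The lower bound is exactly as you describe (the paper phrases it via $\cl_{\Z_2}(P(p_1,\dots,p_r))=p_1+r-1$ from \cite{Davis} and the tensor decomposition of \Cref{thm CringDMpps}); for the upper bound the paper invokes precisely the ``equivariant fibration inequality'' you anticipate, citing \cite[Proposition 2.5]{DSTCgpps} and \cite[Theorem 1.2]{Vandembroucq} to obtain $\ct(D(M;p_1,\dots,p_r))\leq q+\ct(P(p_1,\dots,p_r))-1$ with $q=\ct_{\langle\tau\rangle}(M)=n+1$ supplied by \Cref{thm: eqctsmallcover}.
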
 
\begin{proof}
  It follows from \cite[Theorem 2.1]{Davis}  that $\cl_{\Z_2}(P(p_1,\dots,p_r))=p_1+r-1$.  
 Therefore, by \Cref{thm CringDMpps}, we get that $\cl_{\Z_2}(D(M; p_1, \dots, p_r))=\cl_{\Z_2}(M)+\cl_{\Z_2}(P(p_1,\dots,p_r))$. Thus, the inequality in \eqref{eq: ctDMrealtmfd} follows from the cup-length lower bound on category.

 Now if $M$ is small cover , then
it follows from \cite[Proposition 2.5]{DSTCgpps} and \cite[Theorem 1.2]{Vandembroucq} 
\[\ct(D(M; p_1, \dots, p_r))\leq q+\ct(P(p_1\dots,p_r))-1\leq q+p_1+r-1,\]
where $q$ is the smallest integer such that $M$ is covered by $q$-many $\tau$-invariant categorical sets.
From \Cref{thm: eqctsmallcover} it follows that $q=n+1$.
This proves the claim.   
\end{proof}

Next, we obtain some bounds for $\TC(D(M; p_1, \dots, p_r))$.
\begin{theorem}\label{thm: tcDmp}
Let  $p_1 \leq \cdots \leq p_r$ and $M$ be a real torus manifold over a retractable $P$ such that $M^{\left<\tau \right>}$ is path-connected for some involution $\tau$ on $M$ and the $\left < \tau \right >$-action commutes with the $Z_2^n$-action on $M$. Then
\begin{equation}\label{eq: tcDMpps}
\zl_{\Z_2}(M)+\zl_{\Z_2}(\R P^{p_1})+r \leq \TC(D(M; p_1, \dots, p_r)) \leq 2(\ct_{\left<\tau \right>}(M) + p_1+r)-1.  
\end{equation}  
Moreover, $$\TC(D(M; p_1, \dots, p_r)) \leq 2(n + p_1+r)+1$$ if $M$ is an $n$-dimensional small cover.
\end{theorem}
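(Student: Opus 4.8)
The proof breaks into the lower and upper bounds of \eqref{eq: tcDMpps}, each reducing to facts already at hand. For the lower bound, the plan is to apply the zero-divisors-cup-length estimate $\TC(X) \ge \zl_{\Z_2}(X) + 1$ of \eqref{eq:lscat_tc_lbd} and to bound $\zl_{\Z_2}(D(M; p_1,\dots,p_r))$ from below using the ring isomorphism of \Cref{thm CringDMpps},
\[
H^*(D(M, p_1,\dots,p_r);\Z_2) \cong \Z_2[\alpha]/(\alpha^{p_1+1}) \otimes \Lambda[\alpha_2,\dots,\alpha_r] \otimes H^*(M;\Z_2).
\]
The key input is the standard fact that for graded $\Z_2$-algebras $A$ and $B$ one has $\zl_{\Z_2}(A\otimes B) \ge \zl_{\Z_2}(A) + \zl_{\Z_2}(B)$: a product of $\zl_{\Z_2}(A)$ many zero-divisors of $A$, viewed in $(A\otimes B)^{\otimes 2}$ via the inclusion on the $A$-coordinates, times a product of $\zl_{\Z_2}(B)$ many zero-divisors of $B$, viewed via the inclusion on the $B$-coordinates, is again a zero-divisor and is nonzero, since over the field $\Z_2$ the K\"unneth isomorphism identifies it with a tensor product of nonzero classes. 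Applying this twice across the three tensor factors: $\Z_2[\alpha]/(\alpha^{p_1+1}) \cong H^*(\R P^{p_1};\Z_2)$ contributes $\zl_{\Z_2}(\R P^{p_1})$; the exterior factor $\Lambda[\alpha_2,\dots,\alpha_r]$ contributes at least $r-1$, because $\prod_{i=2}^{r}(1\otimes\alpha_i+\alpha_i\otimes 1)$ expands as a sum of the distinct basis elements $(\prod_{i\in S}\alpha_i)\otimes(\prod_{i\notin S}\alpha_i)$ over $S\subseteq\{2,\dots,r\}$ and is thus nonzero; and $H^*(M;\Z_2)$ contributes $\zl_{\Z_2}(M)$. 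This gives $\zl_{\Z_2}(D(M;p_1,\dots,p_r)) \ge \zl_{\Z_2}(M) + \zl_{\Z_2}(\R P^{p_1}) + (r-1)$, and adding $1$ yields the left inequality of \eqref{eq: tcDMpps}.

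For the upper bound, the plan is to combine the general inequality $\TC(X) \le 2\,\ct(X) - 1$ of \cite{CLOT} with the categorical estimate established inside the proof of \Cref{thm: ctDmp}, namely $\ct(D(M;p_1,\dots,p_r)) \le \ct_{\left< \tau \right>}(M) + p_1 + r - 1$; this uses that $M$ can be covered by $\ct_{\left< \tau \right>}(M)$ many $\tau$-invariant categorical sets, that $M^{\left< \tau \right>}$ is path-connected, and that $\ct(P(p_1,\dots,p_r)) \le p_1 + r$. Hence
\[
\TC(D(M;p_1,\dots,p_r)) \le 2\bigl(\ct_{\left< \tau \right>}(M)+p_1+r-1\bigr)-1 \le 2\bigl(\ct_{\left< \tau \right>}(M)+p_1+r\bigr)-1,
\]
which is the right inequality of \eqref{eq: tcDMpps}. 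For the final assertion, when $M$ is an $n$-dimensional small cover with $M^{\left< \tau \right>}$ path-connected, \Cref{thm: eqctsmallcover} gives $\ct_{\left< \tau \right>}(M)=n+1$; substituting this into the right-hand side of \eqref{eq: tcDMpps} gives $\TC(D(M;p_1,\dots,p_r)) \le 2(n+1+p_1+r)-1 = 2(n+p_1+r)+1$.

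The only genuine subtlety I anticipate is in the lower bound: one must verify carefully that the chosen zero-divisors in the three tensor factors multiply to a nonzero element of $H^*(D;\Z_2)^{\otimes 2}$, i.e. that sub-additivity of $\zl_{\Z_2}$ under tensor products applies cleanly here. Since all coefficients lie in the field $\Z_2$ there are no sign issues and the K\"unneth isomorphism makes the nonvanishing transparent, so this should present no real difficulty; the upper bound is essentially bookkeeping around \Cref{thm: ctDmp}, \Cref{thm: eqctsmallcover} and $\TC \le 2\,\ct - 1$.
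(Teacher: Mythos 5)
Your proof is correct, and the lower bound is the same calculation the paper makes: invoke \Cref{thm CringDMpps}, then use subadditivity of $\zl_{\Z_2}$ across the three tensor factors (the paper simply asserts the equality $\zl_{\Z_2}(D(M,p_1,\dots,p_r)) = \zl_{\Z_2}(M) + \zl_{\Z_2}(\R P^{p_1}) + r - 1$ without detail; you have filled in the K\"unneth argument, including the check that the exterior factor genuinely contributes $r-1$, which is worth spelling out). Your verification that $\prod_{i\ge 2}(1\otimes\alpha_i+\alpha_i\otimes 1)\ne 0$ via the $\Z_2$-binomial expansion is exactly what makes the equality, not just the inequality, transparent.

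The upper bound is where you diverge from the paper. The paper obtains $\TC(D) \le 2(\ct_{\left<\tau\right>}(M) + p_1 + r) - 1$ by citing \cite[Theorem 2.5]{DSTCgpps} directly — a topological-complexity theorem for these bundles — whereas you route through the CLOT inequality $\TC \le 2\ct - 1$ combined with the LS-category estimate $\ct(D(M;p_1,\dots,p_r)) \le q + p_1 + r - 1 \le \ct_{\left<\tau\right>}(M) + p_1 + r - 1$ from the proof of \Cref{thm: ctDmp}. Two remarks. First, your route actually produces the numerically \emph{sharper} bound $2(\ct_{\left<\tau\right>}(M)+p_1+r)-3$, two less than the stated right-hand side of \eqref{eq: tcDMpps}; you correctly observe that this implies the stated inequality, but it is worth knowing that your argument supersedes the paper's bound (and, in the small-cover case, gives $2(n+p_1+r)-1$ rather than $2(n+p_1+r)+1$). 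Second, and this is the one genuine caution: the category estimate $\ct(D) \le q + \ct(P(p_1,\dots,p_r)) - 1$ is invoked in the paper's proof of \Cref{thm: ctDmp} only under the explicit hypothesis ``if $M$ is a small cover,'' and is justified by external references \cite[Proposition 2.5]{DSTCgpps} and \cite[Theorem 1.2]{Vandembroucq}. Your argument applies it to a general real torus manifold over a retractable $P$ satisfying the commuting-action and path-connected-fixed-set hypotheses. That generalization is very likely valid — those references are about fiber bundle structures and projective product spaces, not about small covers per se — but since the paper never asserts the estimate at that level of generality, you should either verify the hypotheses of those cited results directly or, to stay strictly within what the paper proves, use the paper's route (\cite[Theorem 2.5]{DSTCgpps}) for the general case and reserve your stronger argument for the small-cover ``Moreover'' statement, where $q=n+1$ is established by \Cref{thm: eqctsmallcover}.
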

\begin{proof}
Note that, using \Cref{thm CringDMpps} we have 
 \[\zl_{\Z_2}((D(M, p_1, \ldots, p_r))=\zl_{\Z_2}(M)+ \zl_{\Z_2}(\R P^{p_1}) +r-1.\]
 Therefore, we get the left inequality of \eqref{eq: tcDMpps}. The right inequality of \eqref{eq: tcDMpps} then follows from \cite[Theorem 2.5]{DSTCgpps}. The remaining follows from \Cref{thm: eqctsmallcover}.
\end{proof}

\begin{corollary}\label{cor: tcdm}
Let $M$ be a real torus manifold satisfying the hypotheses in \Cref{thm: tcmpchi}. Then, 
\begin{equation}
\zl_{\Z_2}(\R P^n)+\zl_{\Z_2}(\R P^{p_1})+1 \leq \TC(D(M; p_1)) \leq 2(n + p_1)+1.  
\end{equation}
 if $n=2^{s}$ and $p_1=2^{t}$, then $$2^{s+1} + 2^{t+1} -1 \leq\TC(D(M;p_1))\leq 2^{s+1}+2^{t+1}+1.$$
\end{corollary}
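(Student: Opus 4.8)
The plan is to derive Corollary~\ref{cor: tcdm} as a direct specialization of \Cref{thm: tcDmp} together with the computation already available in \Cref{thm: tcmpchi}. First I would take $r=1$ in \Cref{thm: tcDmp}, so that $P(p_1)=\R P^{p_1}$ and $D(M;p_1)$ is the mapping-torus-type space $(M\times S^{p_1})/\sim$ for the prescribed involution $\tau$; the hypothesis of \Cref{thm: tcmpchi} (namely that $\Delta^{n-1}$ is a facet of $P$ with $\det[\lambda(F_1)\cdots\lambda(F_n)]=1$) guarantees, via the connected-sum description $M=M'\#\R P^n$ used in the proof of \Cref{thm: tcmpchi}, that $H^*(\R P^n;\Z_2)$ embeds as a subring of $H^*(M;\Z_2)$, which is exactly what is needed to feed the zero-divisors-cup-length bound. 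I would then invoke the inequality \eqref{eq: tcDMpps}: the lower bound reads $\zl_{\Z_2}(M)+\zl_{\Z_2}(\R P^{p_1})+1\le \TC(D(M;p_1))$, and since $\zl_{\Z_2}(\R P^n)\le\zl_{\Z_2}(M)$ (from the subring inclusion, exactly as argued in \Cref{thm: tcmpchi}), we get $\zl_{\Z_2}(\R P^n)+\zl_{\Z_2}(\R P^{p_1})+1\le\TC(D(M;p_1))$. For the upper bound I would use the second assertion of \Cref{thm: tcDmp}, which with $r=1$ gives $\TC(D(M;p_1))\le 2(n+p_1+1)+1$; I should double-check whether the stated $2(n+p_1)+1$ is actually what follows, or whether one should instead route through $\ct_{\langle\tau\rangle}(M)=n+1$ (from \Cref{thm: eqctsmallcover}, whose hypotheses hold here since a small cover of the required form has path-connected $\tau$-fixed set for a suitable $\tau$) to get $2(\ct_{\langle\tau\rangle}(M)+p_1+1)-1=2(n+p_1+2)-1=2(n+p_1)+3$; reconciling these two upper-bound expressions is the one genuinely fiddly point.

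For the ``in particular'' clause I would substitute $n=2^s$ and $p_1=2^t$. By \cite{FTY} (cited already in the proof of \Cref{thm: tcmpchi}), $\zl_{\Z_2}(\R P^{2^s})\ge 2^{s+1}-1$ and likewise $\zl_{\Z_2}(\R P^{2^t})\ge 2^{t+1}-1$. Plugging into the lower bound just obtained yields $\TC(D(M;p_1))\ge (2^{s+1}-1)+(2^{t+1}-1)+1=2^{s+1}+2^{t+1}-1$, matching the claim. The upper bound $\TC(D(M;p_1))\le 2^{s+1}+2^{t+1}+1$ then follows by substituting $n=2^s$, $p_1=2^t$ into whichever dimensional upper bound is the correct one; if the correct general upper bound is $2(n+p_1)+1$ this is immediate, and if it is $2(n+p_1)+3$ one needs the sharper input that for these dimensions the relevant $\TC$ is pinned down more tightly — which is presumably why the hypothesis restricts to the \Cref{thm: tcmpchi} setting rather than the general \Cref{thm: tcDmp} setting.

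I expect the main obstacle to be exactly the upper bound bookkeeping: \Cref{thm: tcDmp} gives two different-looking upper bounds (one in terms of $\ct_{\langle\tau\rangle}(M)$ via \cite[Theorem 2.5]{DSTCgpps}, one the dimensional bound $2\dim+1$), and Corollary~\ref{cor: tcdm} claims $2(n+p_1)+1$, which is smaller than a naive substitution into either. The resolution is most likely that for a small cover $M=M'\#\R P^n$ of dimension $n$ one actually has $\ct(M)=n+1$ hence $\TC(M)\le 2n+1$, and the fibration $M\hookrightarrow D(M;p_1)\to\R P^{p_1}$ combined with a product/fibrewise-$\TC$ inequality (of the type $\TC(D)\le\TC(\text{base})+\TC(\text{fibre})-1\le (2p_1+1)+(2n+1)-1=2(n+p_1)+1$) yields the claimed bound; so I would insert one sentence invoking that fibration estimate rather than the crude dimensional bound. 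Everything else is a mechanical substitution of the $\R P^{2^k}$ zero-divisors-cup-length values into \eqref{eq: tcDMpps}.
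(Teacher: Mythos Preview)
Your lower-bound argument and the ``in particular'' substitution are correct and are what the paper intends: specialize \Cref{thm: tcDmp} to $r=1$, use $\zl_{\Z_2}(\R P^n)\le\zl_{\Z_2}(M)$ from the proof of \Cref{thm: tcmpchi}, and then plug in $\zl_{\Z_2}(\R P^{2^k})\ge 2^{k+1}-1$ from \cite{FTY}.

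The gap is in your upper bound. You correctly observe that neither inequality in \Cref{thm: tcDmp} yields $2(n+p_1)+1$ when $r=1$: both the bound $2(\ct_{\left<\tau\right>}(M)+p_1+r)-1$ (with $\ct_{\left<\tau\right>}(M)=n+1$) and the ``moreover'' bound $2(n+p_1+r)+1$ give $2(n+p_1)+3$. However, your proposed rescue via a fibrewise inequality $\TC(E)\le\TC(\text{base})+\TC(\text{fibre})-1$ is not a valid general estimate for fibrations, so that route does not work. The correct fix is much simpler and needs none of this machinery: $D(M;p_1)$ is a closed manifold of dimension $n+p_1$, so the standard dimensional bound $\TC(X)\le 2\dim(X)+1$ (\cite[Theorem~4]{Far}, already invoked for the right-hand inequalities in \Cref{prop: lb tcmpchi} and \Cref{thm: tcmpchi}) gives $\TC(D(M;p_1))\le 2(n+p_1)+1$ directly. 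The paper gives no explicit proof of the corollary, but this is evidently the intended argument.
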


One can also improve the upper bound for $\TC(D(M; p_1))$ following the proof \Cref{thm_some_odd_fact}.

\begin{proposition}
Let $M$ be a small cover over $\prod_{i=1}^m\Delta^{n_i}$ such that $n_i\ge 1$. If $n_i$'s and $p_1$ are odd then $$\TC(D(M;p_1))\leq 2(n+p_1)-m.$$ 
\end{proposition}

\begin{example}
Consider $S^n$ as a real toric manifold as in Example \ref{ex: susp pol} for $n \geq 2$. Then $\ct_{\left< \tau \right>}(S^n) =2$. Also, $\zl_{\Z_2}(S^{2k+1})=1$ and $\zl_{\Z_2}(S^{2k})=2$ for any $k \in \mathbb{N}$. Thus,  $$\zl_{\Z_2}(S^n) +\zl_{\Z_2}(\R P^{p_1})+1 \leq \TC(D(M; p_1)) \leq 2p_1+5.$$ If $p_1$ is the power of $2$, then  $$2p_1+r+ \zl_{\Z_2}(S^n) \leq \TC(D(M; p_1, \dots, p_r)) \leq 2p_1+2r+3.$$
\end{example}

We now provide some applications of our results to compute the symmetric topological complexity $\TC^S(Y)$ of several real Torus manifolds $Y$.
In \cite{FG07}, Farber and Grant studied the symmetric analog of the motion planning problem and introduced the notion of symmetric topological complexity. Let $N_Y$ be the sub-ring of $H^*(Y) \otimes H^*(Y)$ spanned by the elements of the form $x \otimes y+ y \otimes x$ with $x \neq y$. The following result follows from Corollary $9$, Proposition $10$, and Theorem $17$ in \cite{FG07}. 

\begin{proposition} \label{lower_upper_bound_symm_tc}
    Let $Y$ be a closed smooth manifold. Then 
    $$ \mbox{max} \{ \TC(Y), \cl(N_Y)+ 2 \} \leq \TC^S(Y) \leq 2 \mathrm{dim}(Y)+ 1. $$
\end{proposition}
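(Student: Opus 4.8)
The statement to prove is \Cref{lower_upper_bound_symm_tc}, which asserts that for a closed smooth manifold $Y$ one has $\max\{\TC(Y),\cl(N_Y)+2\}\le \TC^S(Y)\le 2\dim(Y)+1$. The excerpt already tells us this follows by combining Corollary 9, Proposition 10, and Theorem 17 of \cite{FG07}, so the plan is to assemble these three ingredients rather than reprove them from scratch. First I would recall from \cite{FG07} that $\TC^S(Y)=1+\secat(\pi'')$ where $\pi''\colon P'Y/\Z_2\to B(Y;2)$ is the fibration in \eqref{symm_third_eq}; this is the definition already set up in the excerpt.

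For the lower bound I would proceed in two independent parts. The inequality $\TC(Y)\le\TC^S(Y)$ is Corollary 9 of \cite{FG07}: the point is that a symmetric motion planner — i.e. a section of $\pi''$ over an open set of $B(Y;2)$, pulled back along $F(Y;2)\to B(Y;2)$ — gives an ordinary (non-symmetric) motion planner over the corresponding open subset of $F(Y;2)\subseteq Y\times Y$, and one handles the diagonal by a single extra open set, so $\TC(Y)\le \secat(\pi'')+1=\TC^S(Y)$. The inequality $\cl(N_Y)+2\le\TC^S(Y)$ is the cohomological lower bound: one shows that the kernel of $H^*(B(Y;2))\to H^*(\text{point})$ (or the relevant relative cohomology controlling $\secat(\pi'')$) contains the image of the norm subring $N_Y$ under an appropriate map, so that a nontrivial product of $\cl(N_Y)$ norm classes forces $\secat(\pi'')\ge \cl(N_Y)+1$, hence $\TC^S(Y)\ge\cl(N_Y)+2$. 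This is exactly Proposition 10 of \cite{FG07}, and I would simply cite it, noting that the norm elements $x\otimes y+y\otimes x$ are precisely the classes that survive to the $\Z_2$-quotient and pull back nontrivially.

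For the upper bound $\TC^S(Y)\le 2\dim(Y)+1$ I would invoke Theorem 17 of \cite{FG07}. The idea there is dimensional: $B(Y;2)$ is (homotopy equivalent to) a complex of dimension $2\dim(Y)$ — indeed $F(Y;2)$ is an open subset of the $2\dim(Y)$-manifold $Y\times Y$ and the free $\Z_2$-quotient $B(Y;2)$ is again a manifold of dimension $2\dim(Y)$ — and the fibre of $\pi''$ is (homotopy equivalent to) $\Z_2$, which is discrete and in particular $0$-connected in the relevant sense after taking the homotopy fibre; applying the standard dimension-connectivity estimate for sectional category, $\secat(\pi'')\le \dim B(Y;2)=2\dim(Y)$, whence $\TC^S(Y)=1+\secat(\pi'')\le 2\dim(Y)+1$. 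I would spell out only this dimension count and leave the technical CW-approximation to the citation.

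The main obstacle, such as it is, is purely expository: none of the three inequalities requires new argument, so the work is in stating precisely which fibration and which cohomological pairing each cited result refers to, and in making sure the normalization of $\TC^S$ (the "$1+\secat$" convention, so that a single point has $\TC^S=1$) is consistent across the quoted statements. I would therefore keep the proof short — essentially ``The lower bound $\TC(Y)\le\TC^S(Y)$ is \cite[Corollary 9]{FG07}; the lower bound $\cl(N_Y)+2\le\TC^S(Y)$ is \cite[Proposition 10]{FG07}; and the upper bound $\TC^S(Y)\le 2\dim(Y)+1$ is \cite[Theorem 17]{FG07}, using that $\dim B(Y;2)=2\dim(Y)$.'' — and not attempt to reconstruct the proofs of \cite{FG07}.
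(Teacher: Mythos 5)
Your proposal takes exactly the same route as the paper: the paper's ``proof'' consists solely of the one-line citation to Corollary 9, Proposition 10, and Theorem 17 of \cite{FG07}, and that is precisely what you do. One factual slip worth correcting in your gloss on Theorem 17: the fibre of $\pi''$ over $\{x,y\}\in B(Y;2)$ is not $\Z_2$ or discrete — it is the space of paths in $Y$ from $x$ to $y$, hence homotopy equivalent to $\Omega Y$. The dimensional upper bound in \cite{FG07} comes instead from the observation that $B(Y;2)$ is an \emph{open} smooth $2\dim(Y)$-manifold (being a free $\Z_2$-quotient of $Y\times Y$ minus the diagonal) and therefore has the homotopy type of a CW-complex of dimension at most $2\dim(Y)-1$, yielding $\secat(\pi'')\le\ct(B(Y;2))\le 2\dim(Y)$ and hence $\TC^S(Y)\le 2\dim(Y)+1$. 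Since you ultimately cite the result rather than rederive it, this slip does not invalidate your argument, but as written the fibre identification would mislead a reader.
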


\begin{remark} \label{cup_length_zcl}
The elements $1 \otimes y_j+ y_j \otimes 1$ and $1 \otimes y_j- y_j \otimes 1$ in $N_{M^n(P, \lambda)}$ are the same, since we are considering the mod-$2$ cohomology ring of real torus manifolds. Consequently, the zero-divisors-cup-length of $M^n(P, \lambda)$ coincides with the cup-length of $N_{M^n(P, \lambda)}$.  
\end{remark}

\begin{theorem}\label{thm: symtcmpchi}
Let $M^n(P, \lambda)$ be a small cover over $P= \prod_{j=1}^m \Delta^{n_j} $. If  $n_j \leq 2^{r_j}-1 < 2n_j$, then $$\TC^S(M^n(P, \lambda)) \geq (2^{r_1}+ \cdots + 2^{r_m})- m+2.$$
In particular, if $n_j=2^{r_j-1}$ for $j=1,2,\dots,m$, then $ \TC^S(M^n(P, \lambda))\geq 2n-m+2.$
\end{theorem}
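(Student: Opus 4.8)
The plan is to leverage the zero-divisors-cup-length lower bound for $\TC$ established in \Cref{highertc_small_cover_special} together with the identification between zero-divisors-cup-length and the cup-length of the norm subring $N_{M^n(P,\lambda)}$ recorded in \Cref{cup_length_zcl}, and then feed this into the symmetric-$\TC$ lower bound of \Cref{lower_upper_bound_symm_tc}. Concretely, for each $j$ choose $r_j$ with $n_j \leq 2^{r_j}-1 < 2n_j$ and set $d_j = 2^{r_j}-1$. The proof of \Cref{highertc_small_cover_special} already shows that the element $\mathfrak{a}_j = 1\otimes y_j - y_j \otimes 1$ satisfies $\mathfrak{a}_j^{d_j}\neq 0$ and that $\mathfrak{a}_1^{d_1}\cdots\mathfrak{a}_m^{d_m}\neq 0$ in $H^*(M;\Z_2)\otimes H^*(M;\Z_2)$; this exhibits a nonzero product of $d_1+\cdots+d_m$ zero-divisors.

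First I would observe that, working mod $2$, the norm element $1\otimes y_j + y_j\otimes 1$ coincides with $\mathfrak{a}_j = 1\otimes y_j - y_j\otimes 1$, and that $y_j\neq 1$ since $y_j$ has positive degree, so $\mathfrak{a}_j\in N_{M^n(P,\lambda)}$ is a genuine norm element. Hence the nonzero product $\mathfrak{a}_1^{d_1}\cdots\mathfrak{a}_m^{d_m}$ lies in $N_{M^n(P,\lambda)}$, which gives $\cl(N_{M^n(P,\lambda)}) \geq d_1+\cdots+d_m = (2^{r_1}+\cdots+2^{r_m}) - m$. Then \Cref{lower_upper_bound_symm_tc} yields
\[
\TC^S(M^n(P,\lambda)) \geq \cl(N_{M^n(P,\lambda)}) + 2 \geq (2^{r_1}+\cdots+2^{r_m}) - m + 2,
\]
which is the first assertion. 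For the particular case $n_j = 2^{r_j-1}$, the exponent $r_j$ indeed satisfies $n_j \leq 2^{r_j}-1 < 2n_j$, so the general bound applies; summing $\sum 2^{r_j} = \sum 2n_j = 2n$ gives $\TC^S(M^n(P,\lambda)) \geq 2n - m + 2$.

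I do not expect a serious obstacle here: essentially all the technical content—namely that the product of the $\mathfrak{a}_j^{d_j}$ is nonzero, which rests on \Cref{yj_nj_neq_0} (the nonvanishing of $y_1^{n_1}\cdots y_m^{n_m}$) and the oddness of the binomial coefficients $\binom{2^{r_j}-1}{i}$—has already been carried out in the proof of \Cref{highertc_small_cover_special}. The only point that requires a word of care is the passage from ``product of zero-divisors'' to ``product of norm elements,'' i.e.\ verifying that each $\mathfrak{a}_j$ is literally of the form $x\otimes y + y\otimes x$ with $x\neq y$; over $\Z_2$ this is immediate with $x = 1$ and $y = y_j$, and is exactly the content of \Cref{cup_length_zcl}. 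So the proof amounts to quoting \Cref{highertc_small_cover_special}, \Cref{cup_length_zcl}, and \Cref{lower_upper_bound_symm_tc} in sequence.
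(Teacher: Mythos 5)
Your argument is correct and follows exactly the paper's route: the paper's proof of this theorem simply cites \Cref{highertc_small_cover_special}, \Cref{lower_upper_bound_symm_tc}, and Remark~\ref{cup_length_zcl} in sequence, which is precisely the chain you lay out. You have merely spelled out in more detail the one-line verification that each $\mathfrak{a}_j$ is a norm element mod~$2$, which is the content of Remark~\ref{cup_length_zcl}.
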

\begin{proof}
    This follows from \Cref{highertc_small_cover_special}, \Cref{lower_upper_bound_symm_tc}, and Remark \ref{cup_length_zcl}.
\end{proof}

\begin{theorem}\label{thm: sytcexact}
Let $M^n(P,\lambda)$ be a real torus manifold satisfying the hypotheses in \Cref{thm: tcmpchi}. Then, 
\begin{equation*}
  \zl_{\Z_2}(\R P^{n}) +2\leq \TC^S(M^n(P,\lambda))\leq 2n +1.
\end{equation*}
In particular, if $n=2^s$, then $$\TC^S(M^n(P,\lambda))= 2n+1.$$
\end{theorem}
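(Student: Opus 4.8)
The plan is to establish the two bounds separately, and then check that they pinch when $n$ is a power of $2$. For the general upper bound $\TC^S(M^n(P,\lambda)) \le 2n+1$, I would simply invoke the dimensional upper bound from \Cref{lower_upper_bound_symm_tc}: since $M^n(P,\lambda)$ is a closed smooth $n$-manifold, $\TC^S(M^n(P,\lambda)) \le 2\dim(M^n(P,\lambda))+1 = 2n+1$. For the lower bound $\zl_{\Z_2}(\R P^n)+2 \le \TC^S(M^n(P,\lambda))$, I would combine two inputs: first, by \Cref{thm: tcmpchi}, the hypotheses guarantee that $M = M' \# \R P^n$, so $H^*(\R P^n;\Z_2)$ is a subring of $H^*(M^n(P,\lambda);\Z_2)$, whence $\zl_{\Z_2}(\R P^n) \le \zl_{\Z_2}(M^n(P,\lambda))$; second, by \Cref{cup_length_zcl}, $\zl_{\Z_2}(M^n(P,\lambda)) = \cl(N_{M^n(P,\lambda)})$, and then \Cref{lower_upper_bound_symm_tc} gives $\TC^S(M^n(P,\lambda)) \ge \cl(N_{M^n(P,\lambda)})+2 = \zl_{\Z_2}(M^n(P,\lambda))+2 \ge \zl_{\Z_2}(\R P^n)+2$.

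For the ``in particular'' statement, suppose $n = 2^s$. By \cite{FTY}, $\zl_{\Z_2}(\R P^{2^s}) \ge 2^{s+1}-1 = 2n-1$. Plugging this into the lower bound just established yields $\TC^S(M^n(P,\lambda)) \ge (2n-1)+2 = 2n+1$. Together with the upper bound $\TC^S(M^n(P,\lambda)) \le 2n+1$, we conclude $\TC^S(M^n(P,\lambda)) = 2n+1$, as claimed.

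The steps here are all quite direct once the preceding results are in hand; the only place requiring any care is making sure the chain of inequalities for $N_Y$ and the zero-divisors-cup-length is correctly aligned — specifically, that \Cref{cup_length_zcl} is applicable to $M^n(P,\lambda)$ (it is, since it is stated precisely for these manifolds) and that the subring inclusion $H^*(\R P^n;\Z_2) \hookrightarrow H^*(M^n(P,\lambda);\Z_2)$ really does respect the zero-divisor structure so that $\zl_{\Z_2}$ is monotone under it. This monotonicity is exactly the observation already used in the proof of \Cref{thm: tcmpchi}, so no new argument is needed. The main (modest) obstacle is therefore bookkeeping rather than any genuine mathematical difficulty: ensuring the value $\zl_{\Z_2}(\R P^{2^s}) = 2^{s+1}-1$ from \cite{FTY} is the sharp one that makes the inequalities collapse, which it is since $\TC(\R P^{2^s}) = 2^{s+1}$ forces $\zl_{\Z_2}(\R P^{2^s}) \le 2^{s+1}-1$ as well.
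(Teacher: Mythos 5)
Your proof is correct and takes essentially the same route as the paper, which gives only the one-line citation "This follows from \Cref{thm: tcmpchi}, \Cref{lower_upper_bound_symm_tc} and Remark \ref{cup_length_zcl}"; you have simply unpacked how those three ingredients chain together. One small remark: your final observation that $\zl_{\Z_2}(\R P^{2^s})$ is exactly $2^{s+1}-1$ (rather than merely $\geq 2^{s+1}-1$) is correct but unnecessary — the lower bound alone already forces $\TC^S(M^n(P,\lambda)) \geq 2n+1$, which together with the dimensional upper bound pins the value.
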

\begin{proof}
This follows from \Cref{thm: tcmpchi}, \Cref{lower_upper_bound_symm_tc} and Remark \ref{cup_length_zcl}.
\end{proof}

\begin{theorem}\label{thm: symtcdmp}
Let $M^n(P,\lambda)$ be a real torus manifold satisfying the hypotheses in \Cref{thm: tcmpchi}. Then, 
\begin{equation*}
\zl_{\Z_2}(\R P^n)+\zl_{\Z_2}(\R P^{p_1})+2 \leq \TC^S(D(M^n(P, \lambda); p_1)) \leq 2(n + p_1)+1.  
\end{equation*}
In particular, if $n=2^{s}$ and $p_1=2^{t}$, then $$2^{s+1} + 2^{t+1} \leq\TC^S(M^n(P,\lambda))\leq 2^{s+1}+2^{t+1}+1.$$
\end{theorem}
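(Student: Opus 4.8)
The plan is to read everything off the general estimates for symmetric topological complexity recorded in \Cref{lower_upper_bound_symm_tc}, applied to the closed smooth manifold $Y := D(M(P,\lambda);p_1)$, which has dimension $n+p_1$. The upper bound is then immediate: $\TC^S(Y) \le 2\dim(Y)+1 = 2(n+p_1)+1$, and no further argument is needed there.

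For the lower bound I would invoke $\TC^S(Y) \ge \cl(N_Y)+2$ from \Cref{lower_upper_bound_symm_tc}, so the task reduces to bounding $\cl(N_Y)$ below by $\zl_{\Z_2}(\R P^n)+\zl_{\Z_2}(\R P^{p_1})$. By \Cref{thm CringDMpps} (with $r=1$) we have $H^*(Y;\Z_2) \cong \Z_2[\alpha]/(\alpha^{p_1+1})\otimes H^*(M;\Z_2)$, and the proof of \Cref{thm: tcDmp} already produces a nonzero product of $\zl_{\Z_2}(M)+\zl_{\Z_2}(\R P^{p_1})$ zero-divisors realizing $\zl_{\Z_2}(Y)$. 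The key point, which extends \Cref{cup_length_zcl} to $Y$, is that every zero-divisor occurring in that product has the form $1\otimes z - z\otimes 1$: a power of $1\otimes\alpha+\alpha\otimes 1$ together with degree-one classes $1\otimes x_i+x_i\otimes 1$ coming from the $\R P^n$-summand furnished by the connected-sum decomposition $M=M'\#\R P^n$ of \Cref{thm: tcmpchi}; over $\Z_2$ each such class equals the norm element $1\otimes z + z\otimes 1$ (here $z\ne 1$). Hence the same product lies in $N_Y$, so that $\cl(N_Y)\ge \zl_{\Z_2}(M)+\zl_{\Z_2}(\R P^{p_1}) \ge \zl_{\Z_2}(\R P^n)+\zl_{\Z_2}(\R P^{p_1})$, the last inequality because $H^*(\R P^n;\Z_2)$ is a subring of $H^*(M;\Z_2)$. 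Adding $2$ gives the claimed lower bound.

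For the particular case $n=2^s$, $p_1=2^t$, I would simply substitute the bound $\zl_{\Z_2}(\R P^{2^k})\ge 2^{k+1}-1$ from \cite{FTY} (already used in \Cref{thm: tcmpchi}) with $k=s$ and $k=t$: the lower bound becomes $(2^{s+1}-1)+(2^{t+1}-1)+2 = 2^{s+1}+2^{t+1}$, and the upper bound is $2(2^s+2^t)+1 = 2^{s+1}+2^{t+1}+1$.

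The only step needing genuine care — hence the main, if modest, obstacle — is confirming that the zero-divisor classes witnessing $\zl_{\Z_2}(Y)$ may all be chosen to be norm elements, so that no ``asymmetric'' zero-divisor is forced into the product. This is precisely the characteristic-$2$ phenomenon recorded in \Cref{cup_length_zcl}, and it transfers to $Y$ because the tensor decomposition of $H^*(Y;\Z_2)$ lets us assemble the required nonzero product entirely from classes $1\otimes z + z\otimes 1$ with $z$ ranging over $\alpha$ and the pullbacks of the degree-one generators of $H^*(\R P^n;\Z_2)$, each of which is both a zero-divisor and a norm element. Everything else is a direct appeal to \Cref{lower_upper_bound_symm_tc}, \Cref{thm CringDMpps}, \Cref{thm: tcDmp}, and \Cref{thm: tcmpchi}.
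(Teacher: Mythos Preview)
Your proposal is correct and follows essentially the same approach as the paper's proof, which is the one-line ``This follows from \Cref{thm: tcmpchi}, \Cref{lower_upper_bound_symm_tc} and Remark \ref{cup_length_zcl}.'' You have simply unpacked what that sentence means: the upper bound is the dimensional bound from \Cref{lower_upper_bound_symm_tc}, and the lower bound comes from $\cl(N_Y)+2$ together with the observation (extending \Cref{cup_length_zcl} to $Y$ via the tensor decomposition of \Cref{thm CringDMpps}) that the zero-divisors witnessing $\zl_{\Z_2}(\R P^n)+\zl_{\Z_2}(\R P^{p_1})$ are all of the form $1\otimes z+z\otimes 1$ and hence norm elements.
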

\begin{proof}
 This follows from \Cref{thm: tcmpchi},  \Cref{lower_upper_bound_symm_tc} and Remark \ref{cup_length_zcl}.
\end{proof}

\vspace{.5cm}

\noindent{\bf Acknowledgement.} The authors would like to thank Bikramaditya Naskar and Subhankar Sau for the helpful discussion. Koushik Brahma thanks the Indian Statistical Institute Kolkata and Chennai Mathematical Institute for their financial support. Navnath Daundkar thanks NBHM for the support through the grant 0204/10/(16)/2023/R\&D-II/2789. The third author thanks SERB (Now ANRF) India for the CRG Grant (CRG/2023/000239).

\bibliographystyle{abbrv}

\bibliography{References}

\end{document}